\DeclareFontFamily{OT1}{rsfs}{}
\DeclareFontShape{OT1}{rsfs}{n}{it}{<-> rsfs10}{}
\DeclareMathAlphabet{\curly}{OT1}{rsfs}{n}{it}
\renewcommand\O{\mathcal O}
\newcommand\I{\mathscr I}
\newcommand\PP{\mathbb P}
\newcommand\C{\mathbb C}
\newcommand\Q{\mathbb Q}
\newcommand\R{\mathbb R}
\newcommand\Z{\mathbb Z}
\newcommand\Into{\ar@{^{ (}->}[r]}
\newfont{\bigtimesfont}{cmsy10 scaled \magstep5}
\newcommand{\bigtimes}{\mathop{\lower0.9ex\hbox{\bigtimesfont\symbol2}}}
\newcommand\Hom{\operatorname{Hom}}
\newcommand\Ext{\operatorname{Ext}}
\newcommand\ext{\operatorname{ext}}
\newcommand\Aut{\operatorname{Aut}}
\newcommand\Spec{\operatorname{Spec}\,}
\newcommand\Hilb{\operatorname{Hilb}}
\newcommand\Gr{\operatorname{Gr}}
\newcommand{\len}{\operatorname{len}}
\newcommand\bra{\langle}
\newcommand\ket{\rangle}
\newcommand{\dt}{\operatorname{DT}}
\newcommand{\pii}{\operatorname{PI}}
\newcommand{\p}{\operatorname{p}}
\newcommand{\q}{\operatorname{q}}
\newcommand{\n}{\operatorname{n}}
\newcommand{\ii}{\operatorname{i}}
\newcommand{\s}{\operatorname{S}}
\newcommand{\U}{\operatorname{U}}
\newcommand\beq[1]{\begin{equation}\label{#1}}
\newcommand\eeq{\end{equation}}
\newcommand\beqa{\begin{eqnarray*}}
\newcommand\eeqa{\end{eqnarray*}}
\makeatletter \@addtoreset{equation}{section} \makeatother
\newtheorem{lem}[equation]{Lemma}
\newenvironment{rmk}{\noindent\textbf{Remark}.}{\\}
\newenvironment{exa}{\noindent\textbf{Example}.}{\\}
\title[]{D0-D6 states counting and GW invariants}
\author{Jacopo Stoppa}
\date{14 December 2009}
\address{Department of Pure Mathematics and Mathematical Statistics, University of Cambridge, Wilberforce Road, Cambridge CB3 0WB, UK}
\email{J.Stoppa@dpmms.cam.ac.uk}
\begin{document}  
\begin{abstract} We describe a correspondence between the virtual number of torsion free sheaves locally free in codimension 3 on a Calabi-Yau 3-fold and the Gromov-Witten invariants counting rational curves in a family of orbifold blowups of the weighted projective plane $\PP(-\operatorname{ch}_3, \operatorname{ch}_0, 1)$ (with a tangency condition of order gcd$(-\operatorname{ch}_3, \operatorname{ch}_0)$). This result is a variation of the GW/quiver representations correspondence found by Gross-Pandharipande, when one changes the centres and orders of the blowups. We build on a small part of the theories developed by Joyce-Song and Kontsevich-Soibelman for wall-crossing formulae and by Gross-Siebert-Pandharipande for factorisations in the tropical vertex group.  
\end{abstract}
\maketitle
\section{Introduction}
\subsection{A D0-D6/GW correspondence}
Let $X$ be a projective Calabi-Yau threefold with $H^1(\O_X) = 0$ and topological Euler characteristic $\chi$. In this paper we are concerned with torsion free coherent sheaves of $\O_X-$modules which are isomorphic to the trivial vector bundle of some rank outside a finite length subscheme, `D0-D6 states'. We write the Chern character as
\[(a, r):= (r, 0, 0, -a) \in \bigoplus^{3}_{i = 0} H^{2i}(X, \Z)\]
where $a = -\operatorname{ch}_3$, $r = \operatorname{ch}_0$. The key feature of these sheaves for us is that they can be `counted' in a suitable way.

For rank $r = 1$ these are the \emph{ideal sheaves} of $0$-dimensional subschemes of $X$, and $a$ is the length of the subscheme. They are Gieseker stable with respect to any ample line bundle $\O_X(1)$ and have a fine moduli space $\mathcal{M}(a, 1)\cong\Hilb^a(X)$ with a symmetric obstruction theory in the sense of \cite{bf}.  Donaldson-Thomas theory \cite{richard} produces \emph{integer} virtual counts $\#^{vir}\Hilb^a(X)$. The $0$-dimensional Donaldson-Thomas partition function $\sum_{a\geq 1}\#^{vir}\Hilb^a(X)t^a$ has been computed as $M(-t)^{\chi}$ in \cite{bf}, \cite{lp}, \cite{li} (here and in the rest of the paper $M(t)$ is the MacMahon function, the generating series for 3-dimensional partitions).

For $r = 0$ we are looking instead at direct sums of \emph{structure sheaves} of $0$-dimensional subschemes. Their Chern character is $(-a, 0)$ where $a$ is the length of the $\O_X$-module. Because of automorphisms their moduli space is an Artin stack $\mathcal{M}(-a, 0)$ and it was not clear how to count them correctly until recently. However as a very special case of the fundational work of Joyce-Song \cite{joy} we now have generalised Donaldson-Thomas invariants $\bar\dt(-a, 0)\in \mathbb{Q}$; they are \emph{not} in general the weighted Euler characteristic of the stack $\mathcal{M}(-a, 0)$ with respect to its canonical Behrend function. It is shown in \cite{joy} Section 6.3 that $\bar\dt(-a, 0) = -\chi \sum_{m\mid a}\frac{1}{m^2}$.\\ 

Now let $\mathcal{A}$ be the abelian category of coherent sheaves on $X$ which are locally free in codimension $3$. Its numerical Grothendieck group $K(\mathcal{A})$ is isomorphic to $\Z^2$ spanned by the classes $\mu = [\O_x], \gamma = [\O_X]$ (where $x \in X$ is any closed point). Consider the homomorphism $Z\!: K(\mathcal{A})\to \C$ given by $Z(\mu) = -1, Z(\gamma) = i$. Since $Z$ maps the effective cone $K^{+}(\mathcal{A})$ into $\{\rho \exp(i\varphi) : \rho > 0, 0 < \varphi \leq \pi\}\subset \C$ it determines a \emph{stability condition} on $\mathcal{A}$. The semistable objects $\mathcal{A}^{ss} \subset \mathcal{A}$ are in fact the torsion free sheaves which are isomorphic to the trivial vector bundle of some rank in codimension $3$ (see \cite{ks} Section 6.5). There is an Artin stack of objects of $\mathcal{A}$ which as in \cite{joy} Section 5.1 is locally 2-isomorphic to the zero locus of the gradient of a regular function on a smooth scheme, and $Z$ gives an admissible stability condition in the sense of \cite{joy} Section 3.2.

Joyce-Song theory then yields invariants $\bar\dt(a, r) \in \mathbb{Q}$ which count $Z$-semista-\\ble objects in a suitable way. We also refer to the very recent paper of Toda \cite{toda} for a number of foundational results on higher rank DT invariants in the sense of this paper. When $(a, r)$ is a primitive class the $\bar\dt$ coincide with the DT invariants of \cite{richard}. In particular we recover the numbers counting ideal sheaves and $0$-dimensional subschemes. Notice also that one can show directly that $\bar\dt(0, r) = \frac{1}{r^2}$ and $\bar\dt(a, r) = 0$ for $a = 1, \dots, r-1$ (see \cite{joy} Example 6.1 and \cite{ks} Section 6.5). Therefore in the rest of this paper we concentrate on $\bar\dt(a, r)$ with $a \geq r$. 

It is sometimes possible to compute higher rank D0-D6 numbers more or less directly, using Behrend functions. The reader can find an explicit calculation of $\bar\dt(2, 2) = -\frac{5}{4}\chi$, together with a brief introduction to Joyce-Song invariants in this context in the appendix.\\

A rather different take on the numbers $\bar\dt(a, r)$ is motivated by the work of Kontsevich-Soibelman \cite{ks} and Gross-Siebert-Pandhari\-pande \cite{gps}. The example of D0-D6 states is studied in particular in \cite{ks} Section 6.5.  According to their general theory, Kontsevich-Soibelman conjecture that one can extract \emph{integers} $\Omega$ from the $\bar\dt$ (their `BPS invariants') by inverting the relation $\bar\dt(a, r) = \sum_{m \geq 1, m \mid (a, r)}\frac{1}{m^2}\Omega(\frac{a}{m}, \frac{r}{m})$. Moreover they conjecture that these BPS numbers should be completely determined by a simple identity taking place in the `tropical vertex group', a Lie group of formal symplectomorphisms of the 2-dimensional algebraic torus.

We will argue that this identity should be seen as a commutator expansion in the tropical vertex group. Gross-Pandharipande-Siebert \cite{gps} have developed a theory which interprets such commutators in the tropical vertex group in terms of genus zero Gromov-Witten invariants with a tangency condition. We will explain how `counting' (in the sense of Joyce-Song) the torsion free sheaves on $X$ which are isomorphic to the trivial vector bundle of some rank in codimension 3 becomes equivalent to computing the genus zero Gromov-Witten invariants (with a tangency condition) of some explicit 2-dimensional orbifolds, depending only on $\chi$ and the given $K$-theory class. 

In this paper we prove the Kontsevich-Soibelman identity for $\Omega(a, r)$ for rank $r \leq 3$, use it to deduce the integrality of the relevant BPS numbers and outline an argument for the KS identity (but not integrality) for arbitrary $r$ (for a different situation in which one can show that the Joyce-Song invariants satisfy the relevant KS equation see \cite{dia}). Very recently Toda also studied these $r = 2$ DT invariants, in particular the partition function is computed in \cite{toda} Theorem 1.2 and integrality of BPS states is proved in loc. cit. Theorem 1.3. 

Finally we explain the connection with GW invariants through the results of Gross-Siebert-Pandharipande, which can be expressed as follows.\\

\emph{The BPS numbers $\Omega(h a, h r)$ counting torsion free sheaves with K-theory class a multiple of the primitive class given by coprime $\operatorname{ch}_0=r$ and $\operatorname{ch}_3 = -a$, locally free in codimension 3 on a CY 3-fold with Euler characteristic $\chi$, satisfy the identity in the ring of formal power series $\C[[x,y]]$
\begin{equation}\label{GW}
\prod_{h \geq 1} \exp\left(\sum_{|P_{\chi}| = h a} h N[P_{\chi}](-1)^{h( a +  r)}x^{h a}y^{h r}\right) = \prod_{h \geq 1}(1 - (-1)^{h^2 a r}x^{h a} y^{h r})^{\Omega(h a, h r)}
\end{equation} 
where $N[P_{\chi}]$ are the Gromov-Witten invariants of a family of orbifold blowups of the toric surface given be the fan $\{(-1,0), (0, -1), (a, r)\}\subset \R^2$ (with some points removed and a tangency condition of order $h$ along a smooth divisor), parametrised by graded ordered partitions $P_{\chi}$ (depending on the Euler characteristic $\chi$) with size $|P_{\chi}| = h a$}.\\

The base toric surface is the weighted projective place $\PP(a, r, 1)$ with some points removed. The index $h = \operatorname{gcd}(-\frac{1}{2}\operatorname{ch}_3, \operatorname{ch}_0)$ for sheaves corresponds to the order of tangency for holomorphic curves along the divisor $D_{out}$ dual to $(a, r)$. 

This result should be compared with the correspondence described by Gross-Pandharipande in \cite{gp}, Corollary 3, building on \cite{gps} and the work of Reineke \cite{rein}. In essence Gross-Pandharipande show that the Euler characteristics of the moduli spaces for stable representations of the $m$-Kronecker quiver can be computed in terms of GW theory. The above correspondence says that, in a different region of the tropical vertex group, quiver representations are replaced by D0-D6 states. Geometrically, starting with the same base orbifold $\PP(a, r, 1)$, the invariants for representations of the $m$-Kronecker quiver (with dimension vector proportional to $(a, r)$) are recovered for ordinary blowups along the divisors $D_1, D_2$ dual to $(-1,0), (0, -1)$, parametrised by suitable partitions $P_1, P_2$ of length proportional to $m$. For D0-D6 states we blow up only once along $D_2$, and we also blow up $D_1$ along a partition; but the crucial difference is that now there are also `higher order' corrections, or more precisely \emph{orbifold} blowups in addition to ordinary ones, and in turn these are parametrised by graded ordered partitions of length proportional to $\chi$. The precise statement will be given in section \ref{GW_sect}.
\subsection{Comparison with a physics result} Even before their rigorous definition by Joyce-Song, Cirafici-Sinkovics-Szabo \cite{szabo} have addressed the problem of computing the punctual invariants $\dt(a, r)$ supported at the origin of the affine Calabi-Yau $\C^3$. However as they explain in ibid. Section 7.2 their physical approach based on noncommutative deformation and localisation is only valid in the regime called Coulomb phase, where they compute the partition function simply as $M((-1)^r t)^r$. As we will see this is very different from the result one would get by setting $\chi = 1$ in the correspondence \eqref{GW}. We will argue that the above result in the Coulomb phase (i.e. in physics terminology, in the limit when the gauge group $U(r)$ breaks down to $U(1)^r$) can be seen as a limit of our result when the central charge $Z$ becomes degenerate. In this case we find the partition function $\frac{1}{r^2}M((-1)^r t)^r$, and it seems natural to say its BPS is $M((-1)^r t)^r$. Since $Z$ is degenerate the theory of \cite{gps} cannot be applied and it seems that the Coulomb phase cannot be seen by holomorphic curves.
\subsection{Identity in the tropical vertex group} We follow the notation of \cite{gps}. The tropical vertex group $G$ is a closed subgroup of $\Aut_{\C[[t]]}(\C[x,x^{-1},y,y^{-1}][[t]])$ in the $(t)$-adic topology. It is the $(t)$-adic completion of the subgroup generated by the automorphisms of the form
\begin{equation*}
\theta_{(a,r),f}(x) = f^{-r}\cdot x,\,\,\,\theta_{(a,r),f}(y) = f^a\cdot y
\end{equation*}
with $(a, r) \in \Z^2$ and $f$ a formal power series in $t$ of the form
\begin{equation*}
f = 1 + t x^a y^r \cdot g(x^a y^r, t), \,\,\,g(z, t)\in\C[z][[t]].
\end{equation*}
Alternatively one can see $G$ as a subgroup of the group of formal $1$-parameter families of automorphisms of the algebraic $2$-torus $\C^*\times\C^*$; by direct computation $G$ preserves the standard holomorphic symplectic form $\frac{dx}{x}\wedge\frac{dy}{y}$. 

A basic feature of $G$ is that two elements $\theta_{(a, r), f}, \theta_{(a',r'),f'}$ with $(a',r')$ a multiple of $(a, r)$ commute.

The group $G$ contains some special elements
\begin{equation*}
T_{a, r} = \theta_{(a, r), 1 - (-1)^{a r}(t x)^a (t y)^r},
\end{equation*}
and for $\Omega \in \mathbb{Q}$ we define 
\begin{equation*}
T^{\Omega}_{a, r} (x, y) = \theta_{(a, r), (1 - (-1)^{a r}(t x)^a (t y)^r)^{\Omega}}.
\end{equation*} 
The notation makes sense from the point of view of Lie groups, since
\begin{equation*}
T_{a, b} = \theta_{(a, r), f} = \exp(\log(f)\partial) 
\end{equation*}
for $f = 1 - (-1)^{a r}(t x)^a (t y)^r$ and some $\partial \in \Z dx \oplus \Z dy$ so $T^{\Omega}_{a, r}$ corresponds to $\exp(\Omega\log(f)\partial) = \exp(\log(f^{\Omega})\partial) = \theta_{(a, r), f^{\Omega}}$ (see \cite{gps} section 1.1 and \cite{huy} for the general setup). Notice that in particular
\begin{equation*}
(T_{a,r})^{-1} = T^{-1}_{a,r} = \theta_{(a, r), (1-(-1)^{a r}(t x)^a (t y)^r)^{-1}}.
\end{equation*}

By a fundamental result of Kontsevich-Soibelman every automorphism in the tropical vertex group has a unique ordered product expansion  
\begin{equation}
g = \prod^{\to}_{(a, r)\in\Z^{2}_+}T^{\Omega(a,r)}_{a, r}
\end{equation}
where $\Z^2_+ \subset \Z^2$ means $\{a, r \geq 0\}\setminus\{0\}$ (for a precise definition of the ordered product $\prod^{\to}$ see \cite{ks} Section 2.2 and for the proof of an equivalent statement see e.g. \cite{gps} Theorem 1.4).

Let us now go back to the category $\mathcal{A}$. According to Kontsevich-Soibelman and Joyce-Song one introduces BPS invariants associated to the $\bar\dt$ as 
\begin{equation*}
\Omega(a,r) = \sum_{m \geq 1, m\mid (a, r)}\frac{\mu(m)}{m^2}\bar\dt\left(\frac{a}{m},\frac{r}{m}\right),
\end{equation*} 
where $\mu(m)$ is the M\"obius function (with $\mu(1) = 1, \mu(2) = -1, \mu(3) = -1$, ...). 

For rank one we get simply $\Omega(a, 1) = \bar\dt(a,1)$ for $a\geq 1$ since these classes are primitive. On the other hand one can compute $\Omega(-a, 0) = -\chi$ for $a \geq 1$, see \cite{joy} Section 6.3. Another example is $\Omega(2,2) = -\chi$ and can be found in the appendix.\\
\begin{rmk} Computation suggests the identity
\begin{equation*}
\Omega(a, a-i) = \Omega(a, i) \text{ for } i = 1, \dots, a-1  
\end{equation*}
and the identity
\begin{equation*}
\Omega(a, a) = -\chi \text{ for } a \geq 1.
\end{equation*}
Both could have an interesting interpretation in terms of rational curves under the D0-D6/GW correspondence \eqref{GW}.
\end{rmk}

In \cite{ks} Section 6.5 Kontsevich-Soibelman write down an identity in the tropical vertex group which should be satisfied by the BPS invariants $\Omega$, namely
\begin{equation}\label{wallcrKS}
\prod_{a \geq 1} T^{-\chi}_{a, 0}\cdot T_{0, 1} = \prod^{\to}_{a \geq 0, r \geq 1} T^{\Omega(a, r)}_{a, r}\cdot \prod_{a \geq 1} T^{-\chi}_{a, 0}. 
\end{equation}
According to the factorisation theorem recalled above this formula would determine the $\Omega(a, r)$ uniquely.\\

Let us explain the origin of the formula \ref{wallcrKS}, referring to loc. cit. for a detailed discussion. In \cite{ks} Kontsevich-Soibelman propose an alternative approach to generalised Donaldson-Thomas invariants counting semistable objects is suitable triangulated categories with respect to a Bridgeland stability condition. In particular they propose universal formulae for how the BPS invariants change as the stability condition moves in the space $Stab$. Locally these remain constant, but there are \emph{walls} in $Stab$ on crossing which the $\Omega$ change according to formulae of the form of \ref{wallcrKS}. This theory is still conjectural in parts. However we can apply it formally to $\mathcal{A}$. For this embed $\mathcal{A}$ as the heart of a $t$-structure in the triangulated category $\mathcal{D}$ generated by extensions from $\O_X$ and $\O_{x}$ for $x \in X$. The slope function $Z$ becomes a central charge on $\mathcal{D}$ defining a Bridgeland stability condition, and the Joyce-Song BPS invariants $\Omega$ conjecturally coincide with the Kontsevich-Soibelman invariants counting $Z$-semistable objects with phases in some fixed sector. Kontsevich-Soibelman deform $Z$ by prescribing $Z_{\tau}([\O_X]) = i, Z_{\tau}([\O_x]) = \exp(i\tau\pi)$ for $\tau\in[1,\frac{3}{2}\pi + \epsilon)$ for sufficiently small $\epsilon > 0$. As soon as $t > 1$ the heart $\mathcal{A}$ jumps to its tilting $\mathcal{A}'$ with respect to $0$-dimensional sheaves (i.e. $-[\O_x]$ becomes effective in $K(\mathcal{A}')$ as it is the class of $\O_x[-1]$). The BPS invariants however remain unchanged until $\tau \geq \frac{3}{2}\pi$. For $\tau > \frac{3}{2}\pi$ the only semistable objects have unmixed classes which are multiples of either $[\O_X]$ or $-[\O_x]$. The latter objects have BPS invariants $-\chi$ as we already discussed. By assumption $\O_X$ is rigid and so according to \cite{joy} Section 6.1 we have $\Omega(r\O_X)= \delta_{r,1}$. The equality of factorisations \eqref{wallcrKS} then becomes the Kontseich-Soibelman wall-crossing formula for this situation. 
\subsection{The tropical vertex for GW invariants} Clearly the wall-crossing formula \eqref{wallcrKS} can be rewritten as an expansion for a commutator in $G$,
\begin{equation*}
(T_{0,1})^{-1}\cdot\left(\prod_{a \geq 1} T^{-\chi}_{a, 0}\right)\cdot T_{0, 1}\left(\prod_{a \geq 1} T^{-\chi}_{a, 0}\right)^{-1} = \prod^{\to}_{a \geq 1, r \geq 1} T^{\Omega(a, r)}_{a, r}. 
\end{equation*} 
Using the definition of $T_{a,0}$ and the well known product formula for the McMahon function $M(x) = \prod_{a\geq 1}(1 - x^a)^{-a}$ one can check that this is equivalent to 
\begin{equation*}
(\theta_{0,1})^{-1}\cdot\theta_{(1,0), M(-tx)^{\chi}}\cdot\theta_{0,1}\cdot \theta_{(1,0), M(-tx)^{-\chi}} = \prod^{\to}_{a\geq 1, r\geq 1, \operatorname{gcd}(a, r)=1} \theta_{(a, r), f_{a, r}}
\end{equation*}
with
\begin{equation*}
f_{(a, r)} = \prod_{k \geq 1}(1 - (-1)^{k^2 a r}(t x)^{k a} (t y)^{k r})^{\Omega(ka, kr)}.
\end{equation*}
This is precisely the kind of commutator expansions studied in \cite{gps} by Gross-Pandharipande-Siebert. They have shown that for the ordered product factorisation 
\[[\theta_{(a', r'),f'},\theta_{(a'',r''),f''}] = \prod^{\to}_{a\geq 1, r\geq 1, \operatorname{gcd}(a, r)=1} \theta_{(a, r), f_{a, r}}\]
of the commutator of two generators of $G$ one can write the coefficients of the power series $\log f_{a,r}$ in terms of the GW invariants of orbifold blowups of a toric surface $X_{a,r}$ (with a tangency condition). We will describe explicitely how this result applies in our case, relating D0-D6 states to GW invariants of orbifolds.
\subsection{Plan of the paper} In sections \ref{KS_sect} and \ref{JS_sect} we show that the Joyce-Song invariants satisfy the relevant Kontsevich-Soibelman identities for rank up to $3$, and we use these identities to prove the integrality of the BPS invariants which arise. At the end of section \ref{JS_sect} we outline an argument for the KS identities (but not integrality) for all ranks. Finally in section \ref{GW_sect} we briefly review the theory of Gross-Pandharipande-Siebert and apply it to our special case, thus obtaining the required formulae for D0-D6 states counts in terms of GW invariants of orbifold blowups of weighted projective planes.\\

\noindent\textbf{Acknowledgements.} I wish to thank the participants in the study group `Behrend functions and DT invariants', Max Planck Institute for Mathematics, Bonn, April-May 2009 and in the seminars SFB/TR 45 Bonn-Essen-Mainz `Motivic Donaldson-Thomas invariants', May-July 2009. In particular I was motivated by several conversations with Daniel Huybrechts while setting up the latter seminar. I am also grateful to Richard Thomas and Rahul Pandharipande for important suggestions and comments.
\section{Kontsevich-Soibelman side}\label{KS_sect}
\subsection{Baker-Campbell-Hausdorff formula} Write $\Gamma$ for the lattice $\Z^2$ with basis $\gamma = (0, 1), \mu = (1, 0)$ and anti-symmetric bilinear form $\bra\gamma, \mu\ket = -1$ (in the categorical picture above this corresponds to $\gamma = [\O_X], \mu = [\O_x[-1]]$). The positive cone $\Gamma_{+} \subset \Gamma$ is given by those elements with nonnegative components $(a, r), a + r \geq 1$.

Consider the $\Gamma_{+}$-graded Lie algebra $\mathfrak{g}$ generated over $\C$ by symbols $e_{\eta}, \eta \in \Gamma_{+}$ with bracket
\begin{equation}\label{lie}
[e_{\xi}, e_{\eta}] = (-1)^{\bra\xi, \eta\ket}\bra\xi, \eta\ket e_{\xi + \eta}.
\end{equation}
Then writing $\eta = (a, r)$ for an element of $\Gamma$ there is a natural identification
\begin{equation}\label{ops}
T_{(a, r)} = T_{\eta} = \exp\left(-\sum_{n \geq 1}\frac{e_{n \eta}}{n^2}\right)
\end{equation}
seeing the automorphism $T_{\eta}$ as an element of the exponential of the completion of $\mathfrak{g}$ (see \cite{ks} Section 1.4 for this identification, and notice that here we are replacing the $t$-grading with the finer $\Gamma_{+}$-grading). We rewrite the KS formula \eqref{wallcrKS} as
\begin{equation}\label{wallcrKS2} 
\prod_{n \geq 1} T^{-\chi}_{n \mu}\cdot T_{\gamma}\cdot \left(\prod_{n \geq 1} T^{-\chi}_{n \mu}\right)^{-1} = \prod^{\to}_{n \geq 0, r \geq 1} T^{\Omega(n\mu + r\gamma)}_{n\mu + r\gamma}.
\end{equation}
Let us define operators
\[A = \chi \sum_{n \geq 1} \sum_{i \geq 1} \frac{e_{i n\mu}}{i^2},\,\,\, B = -\sum_{j \geq 1}\frac{e_{j\gamma}}{j^2}.\]
In what follows we will denote the left and right hand sides of \eqref{wallcrKS2} simply by $lhs$, $rhs$. Using repeatedly \eqref{lie} and \eqref{ops} the left hand side of \eqref{wallcrKS2} can be rewritten as
\begin{equation*}
lhs = \exp(A) \exp(B) \exp(-A).
\end{equation*} 
We will use the following form of the Baker-Campbell-Hausdorff formula, 
\begin{align}
\nonumber\exp(A) \exp(B) \exp(-A) &= \exp\left(B + \sum_{k\geq 1} \frac{\operatorname{Ad}^k_A(B)}{k!}\right)\\
&= \exp(B + [A, B] + \frac{1}{2}[A, [A, B]] + \dots)
\end{align}
Let us write $\n, \ii$ for multi-indexes of length $k \geq 1$ with integer entries $n_l, i_l \geq 1$, and $\n\cdot\ii = \sum^k_{l = 1} n_l i_l$ for their ordinary scalar product. For $k \geq 1$ we can compute
\begin{equation*}
\operatorname{Ad}^k_A(B) = -\chi^k \sum_{\n, \ii}\sum_{j \geq 1}(-1)^{j\n\cdot\ii}j^{k-2}\frac{\prod n_l}{\prod i_l}e_{\n\cdot\ii \mu + j\gamma}.
\end{equation*}
Thus we find 
\begin{equation}\label{boundary}
\log(lhs) = -\sum_{j \geq 1}\frac{e_{j\gamma}}{j^2}-\sum_{k \geq 1}\frac{\chi^k}{k!}\sum_{\len(\n) = \len(\ii) = k}\sum_{j \geq 1}(-1)^{j\n\cdot\ii}j^{k-2}\frac{\prod n_l}{\prod i_l}e_{\n\cdot\ii \mu + j\gamma}.
\end{equation}
\subsection{Rank $r = 1$} Consider the subspace $\mathfrak{g}_{> 1}$ of $\mathfrak{g}$ generated by $e_{\eta}$ with $\bra\mu,\eta\ket > 1$. By \eqref{lie} this is an ideal $\mathfrak{g}_{> 1} < \mathfrak{g}$, so we can form the quotient Lie algebra $\mathfrak{g}/\mathfrak{g_{> 1}}$. The right hand side $rhs$ of \eqref{wallcrKS2} can be projected via 
\[\pi_{\leq 1}\!:\exp(\mathfrak{g}) \to \exp(\mathfrak{g}/\mathfrak{g}_{> 1})\]
taking the form
\[\pi_{\leq 1}(rhs) = \pi_{\leq 1}({T}_{1, 0}) \prod_{a \geq 1} \pi_{\leq 1}({T}^{\Omega(a, 1)}_{a, 1}).\]
Now 
\[\pi_{\leq 1}(T_{1,0}) = \exp(-e_{\gamma}),\,\,\, \pi_{\leq 1}(T^{\Omega(a, 1)}_{a, 1}) = \exp(-\Omega(a,1) e_{a \mu + \gamma}),\]
and \emph{in the quotient} we have $[e_{a\mu + \gamma}, e_{a'\mu + \gamma}] = 0$, so
\[\log(rhs) = -e_{\gamma} - \sum_{a \geq 1}{\Omega(a, 1)e_{a\mu + \gamma}}.\]
Comparing with the left hand side gives the rank $r = 1$ KS formula
\begin{equation}
\Omega(a, 1) = (-1)^a\sum_{k \geq 1} \frac{\chi^k}{k!} \sum_{\operatorname{len}(\n) = \operatorname{len}(\ii) = k, \n\cdot\ii = a} \frac{\prod n_l}{\prod i_l}.
\end{equation}
These are the usual $0-$dimensional DT invariants, but this particular way to represent them turns out to be very useful for the generalisation to higher rank.
\begin{rmk} The $r = 1$ formula thus gives
\begin{equation*}
\sum_{a \geq 0} t^a (-1)^a\sum_{k \geq 1} \frac{\chi^k}{k!} \sum_{\operatorname{len}(\n) = \operatorname{len}(\ii) = k, \n\cdot\ii = a} \frac{\prod n_l}{\prod i_l} = M(-t)^{\chi}.
\end{equation*}
In general comparing with \eqref{boundary} above we find
\begin{equation}
\sum_{a \geq 0}t^a(-1)^{r a}\sum_{k \geq 1}\frac{\chi^k}{k!}\sum_{\len(\n) = \len(\ii) = k, \n\cdot\ii = a}\sum_{r \geq 1}r^{k-2}\frac{\prod n_l}{\prod i_l}e_{\n\cdot\ii \mu + r\gamma} = \frac{1}{r^2}M((-1)^r t)^{r\chi}.
\end{equation}  
By this computation and according to \cite{ks} Section 2.3 the partition function for rank $r$ BPS states \emph{for the degenerate stability condition on the wall} is 
\begin{equation}
Z^{BPS}_{r, degen}(t) = \frac{1}{r^2}M((-1)^r t)^{r\chi}.
\end{equation}
These are not integers and are not expected to be since the degenerate stability condition is not `generic' in the sense of \cite{joy} Section 1.4. But there is an obvious way to make them integral, namely taking $r^2 Z^{BPS}_{r, degen}(t)$. This agrees with the physics result from \cite{szabo}, i.e. in the Coulomb phase. It would be interesting to understand this correspondence better. 
\end{rmk}
\subsection{Rank $r=2$} Similarly we can work out a formula for $\Omega(a, 2)$. Let us consider the quotient $\mathfrak{g}/\mathfrak{g}_{>2}$ with projection $\pi_{\leq 2}\!: \exp(\mathfrak{g})\to\exp(\mathfrak{g}/\mathfrak{g}_{> 2})$; the projection of the right hand side is
\begin{equation*}
\pi_{\leq 2}(rhs) = \pi_{\leq 2}(T_{0, 1})\prod^{\to}_{a \geq 1, 1 \leq b \leq 2} \pi_{\leq 2}(T^{\Omega(a,b)}_{(a, b)}).
\end{equation*}
Explicitly, 
\begin{align*}
\pi_{\leq 2}(T^{\Omega(0, 1)}_{0,1}) &= \exp(-e_{\gamma}-\frac{1}{4}e_{2\gamma}),\\
\pi_{\leq 2}(T^{\Omega(a, 1)}_{a,1}) &= \exp(-\Omega(a,1)(e_{a\mu +\gamma}+\frac{1}{4}e_{2a\mu+2\gamma})),\\
\pi_{\leq 2}(T^{\Omega(a, 2)}_{a, 2}) &= \exp(-\Omega(a,2)e_{a\mu + 2\gamma}).
\end{align*}
Since by \eqref{lie} $[\mathfrak{g}/\mathfrak{g}_{>2},[\mathfrak{g}/\mathfrak{g}_{>2},\mathfrak{g}/\mathfrak{g}_{>2}]] = 0$ the Baker-Campbell-Hausdorff formula gives 
\begin{align*}
\nonumber &\log(rhs) = -e_{\gamma}-\frac{1}{4}e_{2\gamma} - \sum_{a \geq 1}\Omega(a, 1)(e_{a\mu +\gamma}+\frac{1}{4}e_{2a\mu+2\gamma})\\
&-\sum_{a \geq 1}\Omega(a, 2)(e_{a\mu + 2\gamma})+\frac{1}{2}\sum_{a' < a''}(-1)^{a'-a''}(a'-a'')\Omega(a', 1)\Omega(a'', 1)e_{(a'+a'')\mu + 2\gamma}.
\end{align*}  
When $a$ is odd (i.e. in the primitive case) comparing with $lhs$ gives
\begin{align}
\nonumber\Omega(a, 2) &= \sum_{k \geq 1} 2^{k-2}\frac{\chi^k}{k!} \sum_{\operatorname{len}(\n) = \operatorname{len}(\ii) = k, \n\cdot\ii = a} \frac{\prod n_l}{\prod i_l}\\
&+ \frac{(-1)^a}{2}\sum_{a' < a'', a' + a'' = a}(a'-a'')\Omega(a', 1)\Omega(a'', 1),
\end{align}
while for $a$ even there is an additional term,
\begin{align}
\nonumber\Omega(a, 2) &= \sum_{k \geq 1} 2^{k-2}\frac{\chi^k}{k!} \sum_{\operatorname{len}(\n) = \operatorname{len}(\ii) = k, \n\cdot\ii = a} \frac{\prod n_l}{\prod i_l}\\
\nonumber &+ \frac{(-1)^a}{2}\sum_{a' < a'', a' + a'' = a}(a'-a'')\Omega(a', 1)\Omega(a'', 1)\\
&- \frac{1}{4}\Omega(a/2,1).
\end{align}
According to the definition of BPS invariants in each case we get
\begin{align}
\nonumber\bar\dt(a,2) &= \sum_{k \geq 1} 2^{k-2}\frac{\chi^k}{k!} \sum_{\operatorname{len}(\n) = \operatorname{len}(\ii) = k, \n\cdot\ii = a} \frac{\prod n_l}{\prod i_l}\\
&+ \frac{(-1)^a}{2}\sum_{a' < a'', a'+a''=a}(a'-a'')\Omega(a', 1)\Omega(a'', 1).
\end{align}
\begin{exa} The first few terms of the partition function for rank $r=2$ and $\chi = 1$ BPS states is 
\begin{equation*}
Z^{BPS}_{r}(t) = - t^2(1 + 6t + 21t^2 + 61t^3 + 165t^4 + 426t^5 + ...)
\end{equation*} 
As we mentioned above we expect that the physics result from \cite{szabo} corresponds instead to the degenerate stability condition on the wall, namely
\begin{equation*}
Z^{BPS}_{r, Coulomb}(t) = r^2 Z^{BPS}_{r, degen}(t) = 1 - 2t + 7t^2 - 18t^3 + 47 t^4 -110 t^5 + 258 t^6 -568 t^7 + ...
\end{equation*}
\end{exa}
\subsection{Rank $r = 3$} Under the projection $\pi_{\leq 3}\!: \exp(\mathfrak{g})\to\exp(\mathfrak{g}/\mathfrak{g}_{> 2})$ we find
\begin{equation*}
\pi_{\leq 3}(rhs) = \pi_{\leq 3}(T_{0, 1})\prod^{\to}_{a \geq 1, 1 \leq b \leq 3} \pi_{\leq 3}(T^{\Omega(a,b)}_{(a, b)})
\end{equation*}
with
\begin{align*}
\pi_{\leq 3}(T^{\Omega(0, 1)}_{0,1}) &= \exp(-e_{\gamma}-\frac{1}{4}e_{2\gamma}-\frac{1}{9}e_{3\gamma}),\\
\pi_{\leq 3}(T^{\Omega(a, 1)}_{a,1}) &= \exp(-\Omega(a, 1)(e_{a\mu +\gamma} + \frac{1}{4}e_{2a\mu+2\gamma} + \frac{1}{9}e_{3a\mu+3\gamma})),\\
\pi_{\leq 3}(T^{\Omega(a, 2)}_{a, 2}) &= \exp(-\Omega(a,2)e_{a\mu + 2\gamma})\\
\pi_{\leq 3}(T^{\Omega(a, 3)}_{a, 3}) &= \exp(-\Omega(a,3)e_{a\mu + 3\gamma}).
\end{align*}
We can compute which terms $x$ with $\bra\mu, x \ket = 3$ appear in $\log(rhs)$ in the Lie algebra $\mathfrak{g}/\mathfrak{g}_{>3}$. These terms have a different form according to an ordered partition for the rank $r = 3$, namely $3, 2+1, 1+2, 1+1+1$, corresponding to the order of the Lie brackets involved. The type $3$ term is
\begin{equation*}
-\frac{1}{9}\sum_{a \geq 0}\Omega(a,1)e_{3a\mu + 3\gamma} - \sum_{a \geq 1}\Omega(a, 3)e_{a\mu + 3\gamma}.
\end{equation*} 
The type $2+1$ comprises 
\[\frac{1}{2}\sum_{a_1 < a_2}[-\frac{1}{4}\Omega(a_1, 1)e_{2 a_1 \mu + 2\gamma}, -\Omega(a_2, 1)e_{a_2\mu +\gamma}]\]
\[= \frac{1}{4}\sum_{a_1 < a_2}(a_1-a_2)\Omega(a_1, 1)\Omega(a_2, 1)e_{(2 a_1 + a_2)+3\gamma}\]
and
\[\frac{1}{2}\sum_{a_1 < 2 a_2}[-\Omega(a_1,2)e_{a_1\mu+2\gamma}, -\Omega(a_2,1)e_{a_2\mu + \gamma}]\]
\[= \frac{1}{2}\sum_{a_1 < 2 a_2}(-1)^{a_1-2a_2}(a_1 - 2a_2)\Omega(a_1,2)\Omega(a_2,1)e_{(a_1+a_2)\mu + 3\gamma}.\]
Similarly for type $1+2$ there are terms
\[\frac{1}{2}\sum_{a_1 < a_2}[-\Omega(a_1,1)e_{a_1\mu+\gamma},-\frac{1}{4}\Omega(a_2,1)e_{2 a_2\mu + 2\gamma}]\]
\[=\frac{1}{4}\sum_{a_1 < a_2}(a_1-a_2)\Omega(a_1,1)\Omega(a_2,1)e_{(a_1+2a_2)\mu + 3\gamma}\]
and
\[\frac{1}{2}\sum_{2a_1 < a_2}[-\Omega(a_1,1)e_{a_1\mu+\gamma},-\Omega(a_2, 2)e_{a_2\mu + 2\gamma}]\]
\[=\frac{1}{2}\sum_{2a_1 < a_2}(-1)^{2a_1-a_2}(2a_1-a_2)\Omega(a_1,1)\Omega(a_2,2)e_{(a_1 + a_2)\mu+3\gamma}.\]
For the type $1+1+1$ term recall the BCH formula up to order $3$ Lie brackets, 
\begin{equation*}
\log(\exp(X)\exp(Y)) = X + Y + \frac{1}{2}[X,Y] + \frac{1}{12}([X,[X,Y]]-[Y,[X,Y]]),
\end{equation*}
so we find contributions
\begin{align*}
\nonumber\frac{1}{4}\sum_{a_1 < a_2 < a_3}[[-\Omega(a_1,1)e_{a_1\mu + \gamma},-\Omega(a_2,1)&e_{a_2\mu+\gamma}],-\Omega(a_3,1)e_{a_3\mu+\gamma}]\\
\nonumber=-\frac{1}{4}\sum_{a_1 < a_2 < a_3}(-1)^{a_1-a_2}(-1)^{a_1 + a_2-2a_3}&(a_1-a_2)(a_1 + a_2 - 2a_3)\\
&\Omega(a_1,1)\Omega(a_2,1)\Omega(a_3,1)e_{(a_1+a_2+a_3)\mu+3\gamma},
\end{align*}
\begin{align*}
\nonumber\frac{1}{4}\sum_{a_1 < a_2 < a_3}[-\Omega(a_1,1)e_{a_1\mu + \gamma},[-\Omega(a_2,1)&e_{a_2\mu+\gamma},-\Omega(a_3,1)e_{a_3\mu+\gamma}]]\\
\nonumber=-\frac{1}{4}\sum_{a_1 < a_2 < a_3}(-1)^{a_2-a_3}(-1)^{2a_1 - a_2-a_3}&(a_1-a_2)(2a_1 - a_2 - a_3)\\
&\Omega(a_1,1)\Omega(a_2,1)\Omega(a_3,1)e_{(a_1+a_2+a_3)\mu+3\gamma},
\end{align*}
and
\begin{align*}
\frac{1}{12}\sum_{a_1 < a_2}\big([-\Omega(a_1,1)&e_{a_1\mu+\gamma},[-\Omega(a_1,1)e_{a_1\mu+\gamma},-\Omega(a_2,1)e_{a_2\mu+\gamma}]]\\
&-[-\Omega(a_2,1)e_{a_2\mu+\gamma},[-\Omega(a_1,1)e_{a_1\mu+\gamma},-\Omega(a_2,1)e_{a_2\mu+\gamma}]]\big)
\end{align*}
\begin{align*}
\nonumber = -\frac{1}{12}\sum_{a_1 < a_2}(-1)^{2(a_1-a_2)}(a_1-a_2)^2\big(&(\Omega(a_1,1))^2\Omega(a_2,1)e_{(2a_1+a_2)\mu + 3\gamma}\\
&+ \Omega(a_1,1)(\Omega(a_2,1))^2 e_{(a_1+ 2a_2)\mu + 3\gamma}\big).
\end{align*}
Summing over the previous terms we find the lengthy $r = 3$ KS identity
\begin{align}
\nonumber\Omega(a,3) &= (-1)^a\sum_{k \geq 1}3^{k-2}\frac{\chi^k}{k!}\sum_{\len(\n) = \len(\ii) = k, \n\cdot\ii = a}\frac{\prod n_l}{\prod i_l}\\
\nonumber&+\frac{1}{4}\sum_{a_1 < a_2, 2a_1+a_2 = a}(a_1-a_2)\Omega(a_1, 1)\Omega(a_2, 1)\\
\nonumber&+\frac{1}{2}\sum_{a_1 < 2a_2,a_1+a_2=a}(-1)^{a_1-2a_2}(a_1-2a_2)\Omega(a_1,2)\Omega(a_2,1)\\
\nonumber&+\frac{1}{4}\sum_{a_1 < a_2, a_1+2a_2 = a}(a_1-a_2)\Omega(a_1, 1)\Omega(a_2, 1)\\
\nonumber&+\frac{1}{2}\sum_{2a_1 < a_2, a_1+a_2 = a}(-1)^{2a_1-a_2}(2a_1-a_2)\Omega(a_1,1)\Omega(a_2, 2)\\
\nonumber&-\frac{1}{4}\sum_{a_1 < a_2 < a_3, a_1+a_2+a_3=a}(a_1-a_2)(a_1+a_2-2a_3)\Omega(a_1,1)\Omega(a_2,1)\Omega(a_3,1)\\
\nonumber&-\frac{1}{4}\sum_{a_1 < a_2 < a_3, a_1+a_2+a_3=a}(a_2-a_3)(2a_1-a_2-a_3)\Omega(a_1,1)\Omega(a_2,1)\Omega(a_3,1)\\
\nonumber&-\frac{1}{12}\sum_{a_1 < a_2,2a_1+a_2 = a}(a_1-a_2)^2(\Omega(a_1,1))^2\Omega(a_2,1)\\
\nonumber&-\frac{1}{12}\sum_{a_1 < a_2,a_1+2a_2 = a}(a_1-a_2)^2\Omega(a_1,1)(\Omega(a_2,1))^2\\
&-\frac{1}{9}\Omega(a/3, 1)
\end{align}
where it is understood that the last term only appears when $3\mid a$. As in the case of rank $r=2$ this gives an identity for $\bar\dt(a, 3) = \Omega(a, 3) + \frac{1}{9}\Omega(a/3, 1)$ where the last term only appears if $3\mid a$.
\subsection{Application to integrality} In the next section we will prove that the Joyce-Song invariants $\bar\dt(a, r)$ for $r \leq 3$ satisfy the above KS identities. Here we show how to deduce integrality of the BPS numbers for $r \leq 3$ from these identities. The best result towards integrality in general has been proved by Reineke \cite{rein}, but it does not seem to imply integrality for D0-D6 states counts, at least without further work.

Consider first the case $r = 2$. When $2 \nmid a$ we have $\Omega(a, 2) = \bar\dt(a, 2)$ which is integral by Joyce-Song theory since the class $(a,2)$ is primitive. Therefore we assume $2\mid a$. Going back to the $r = 2$ KS identity, notice that 
\begin{align*}
\sum_{a' < a'', a' + a'' = a}(a'-a'')\Omega(a', 1)\Omega(a'', 1) &= \sum_{a' < a'', a' + a'' = a}(a-2a'')\Omega(a', 1)\Omega(a'', 1)\\
&\equiv 0 \mod 2.
\end{align*}
So integrality of $\Omega(a, 2)$ follows if we can prove that
\begin{equation*}
\sum_{k \geq 1} 2^{k-2}\frac{\chi^k}{k!} \sum_{\operatorname{len}(\n) = \operatorname{len}(\ii) = k, \n\cdot\ii = a} \frac{\prod n_l}{\prod i_l} - \frac{1}{4}\Omega(a/2,1)
\end{equation*}
is an integer. This in turn is equivalent to 
\begin{equation*}
\sum_{k \geq 1}\frac{(2\chi)^k}{k!} \sum_{\operatorname{len}(\n) = \operatorname{len}(\ii) = k, \n\cdot\ii = a} \frac{\prod n_l}{\prod i_l} \equiv \Omega(a/2,1) \mod 4.
\end{equation*}
But notice that we can use the $r = 1$ KS identity to relate the left hand side of the above congruence to the McMahon function $M(t)$, namely the left hand side is just the coefficient of $t^a$ in the formal power series $M(t)^{2\chi} = \prod_{n\geq 1}(1-t^n)^{-2 n \chi}$. The right hand side is the coefficient of $t^{a/2}$ in the formal power series $M(-t)^{\chi} = \prod_{n\geq 1}(1-(-t)^n)^{-n \chi}$. So the $r = 1, 2$ KS identities together reduce integrality to
\begin{lem} For $2 \mid a$
\[[t^a]M(t)^{2\chi} \equiv (-1)^{a/2}[t^{a/2}]M(t)^{\chi} \mod 4.\]
\end{lem}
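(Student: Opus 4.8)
The plan is to prove the congruence at the level of formal power series in $\Z[[t]]$, exploiting the multiplicative structure of $M$ rather than wrestling with the individual coefficients. Writing $a = 2m$, the two sides are $[t^{2m}]M(t)^{2\chi}$ and $(-1)^m[t^m]M(t)^\chi = [t^{2m}]M(-t^2)^\chi$, so it suffices to show that $M(t)^{2\chi}$ and $M(-t^2)^\chi$ agree modulo $4$ in every even degree. Since $M \in 1 + t\Z[[t]]$, for $\chi\in\Z$ all the series involved lie in $1 + t\Z[[t]]$ and the mod $4$ statements make sense.

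First I would record the elementary factorisation coming from $(1-t^n)^2 = (1-t^{2n})\frac{1-t^n}{1+t^n}$, which upon taking $(-2n)$-th, resp.\ $(-n)$-th powers and multiplying over $n$ gives
\begin{equation*}
M(t)^2 = M(t^2)\cdot R(t),\qquad R(t) = \prod_{n\geq 1}\Big(\frac{1+t^n}{1-t^n}\Big)^n.
\end{equation*}
Since $\frac{1+t^n}{1-t^n} = 1 + 2\frac{t^n}{1-t^n}$, expanding $(1+2w)^n\equiv 1 + 2nw \pmod 4$ and multiplying out (all cross terms carry a factor $4$) yields $R(t)\equiv 1 + 2\sum_{N\geq 1}\sigma(N)t^N\pmod 4$, where $\sigma(N) = \sum_{d\mid N}d$; here I use $\sum_n n\,\frac{t^n}{1-t^n} = \sum_N\sigma(N)t^N$. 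Raising to the $\chi$ and using $(1+2\Sigma)^\chi\equiv 1 + 2\chi\Sigma\pmod 4$ (valid for all $\chi\in\Z$, since the binomial coefficients are integers) gives
\begin{equation*}
M(t)^{2\chi}\equiv M(t^2)^\chi\Big(1 + 2\chi\sum_N\sigma(N)t^N\Big)\pmod 4.
\end{equation*}

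The same device applied to $M(-t^2)$ — where only the odd $n$ produce a discrepancy, via $M(-t^2) = M(t^2)\prod_{n\text{ odd}}\big(\frac{1+t^{2n}}{1-t^{2n}}\big)^{-n}$ — gives
\begin{equation*}
M(-t^2)^\chi\equiv M(t^2)^\chi\Big(1 - 2\chi\sum_{N}\sigma_{\mathrm{odd}}(N)t^{2N}\Big)\pmod 4,
\end{equation*}
with $\sigma_{\mathrm{odd}}(N)$ the sum of the odd divisors of $N$. Subtracting the two displays and extracting $[t^{2m}]$, the leading $M(t^2)^\chi$ terms cancel and one is left with $2\chi$ times $\sum_{j}c_j\big(\sigma(2(m-j)) + \sigma_{\mathrm{odd}}(m-j)\big)$, where $c_j = [t^j]M(t)^\chi\in\Z$. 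Thus everything reduces to the purely arithmetic parity statement $\sigma(2K) + \sigma_{\mathrm{odd}}(K)\equiv 0\pmod 2$ for every $K\geq 1$: the odd divisors of $2K$ are exactly those of $K$, so $\sigma(2K)\equiv\sigma_{\mathrm{odd}}(K)\pmod 2$ (the even divisors contribute an even sum), whence $\sigma(2K) + \sigma_{\mathrm{odd}}(K)\equiv 2\sigma_{\mathrm{odd}}(K)\equiv 0$. The overall factor $2\chi$ then upgrades this to a vanishing modulo $4$, which is exactly the claim.

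I expect the main obstacle to be organisational rather than deep: spotting the clean factorisation $M(t)^2 = M(t^2)R(t)$ and its twisted analogue for $M(-t^2)$, and carrying the mod $4$ bookkeeping correctly, so that the troublesome sign $(-1)^m$ on the right-hand side is absorbed into the functional equation rather than compared term by term. A direct coefficient comparison against $M(t^2)^\chi$ is possible but forces an awkward split into $m$ even and $m$ odd; routing through $M(-t^2)$ avoids this and funnels the whole problem into the single congruence $\sigma(2K)+\sigma_{\mathrm{odd}}(K)\equiv 0\pmod 2$, after which the number theory is immediate. The only subtlety to double-check is that the binomial expansions remain valid for negative $\chi$, so that no positivity assumption on the Euler characteristic is needed.
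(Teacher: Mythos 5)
Your proof is correct, but it takes a genuinely different route from the paper. The paper works coefficient by coefficient: it invokes the partition representation $[t^a]\prod_n(1-t^n)^{-c_n} = \sum_{\p\vdash a}\prod_i\binom{c_i-1+p_i-p_{i+1}}{p_i-p_{i+1}}$ (from Reineke), then argues that partitions with parts of mixed parity or with all parts odd contribute $0$ modulo $4$, and matches the surviving all-even partitions of $a$ with partitions of $a/2$ via two auxiliary binomial congruences modulo $2$ and $4$ that are asserted to follow ``by induction.'' You instead work at the level of the power series ring: the factorisation $M(t)^2 = M(t^2)\cdot\prod_n\bigl(\tfrac{1+t^n}{1-t^n}\bigr)^n$, the mod-$4$ linearisation $(1+2w)^\chi\equiv 1+2\chi w$, and the twisted analogue for $M(-t^2)^\chi$ reduce everything to the single parity statement $\sigma(2K)+\sigma_{\mathrm{odd}}(K)\equiv 0\pmod 2$, which is immediate. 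I checked the steps: the identity $(1-t^n)^2=(1-t^{2n})\tfrac{1-t^n}{1-(-t^n)}$ is right, the cross terms in the infinite products do all carry a factor of $4$ (and the products converge $t$-adically, so only finitely many factors matter in each degree), the generalised binomial coefficients $\binom{\chi}{k}$ are integers for negative $\chi$ so no positivity of the Euler characteristic is needed, and the final coefficient extraction $2\chi\sum_j c_j\bigl(\sigma(2(m-j))+\sigma_{\mathrm{odd}}(m-j)\bigr)$ is indeed divisible by $4$. What your approach buys is self-containedness and transparency: the paper's proof leaves its two key binomial congruences unproved, whereas your reduction bottoms out in elementary divisor arithmetic; the sign $(-1)^{a/2}$ is also handled more gracefully by absorbing it into $M(-t^2)$ rather than tracked through the partition bijection. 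What the paper's approach buys is that the same partition/binomial machinery generalises directly to the mod-$9$ statement for $r=3$ in the next lemma, where your factorisation trick would need a genuinely new identity in place of $(1-t^n)^2=(1-t^{2n})\tfrac{1-t^n}{1+t^n}$.
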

\begin{proof} We use the identity for Euler products
\begin{equation}
[t^a]\prod_{n\geq 1}(1-t^n)^{-c_n}) = \sum_{\p\vdash a}\prod_{i\geq 1}\binom{c_n-1 + p_{i}-p_{i+1}}{p_{i}-p_{i+1}}
\end{equation}
where (in contrast to the rest of the paper) the sum is over partitions rather than ordered partitions. We learned of this representation from \cite{rein} Lemma 5.3. In our case this gives
\begin{equation*}
[t^a]M(t)^{2\chi} = \sum_{\p \vdash a}\prod_{i \geq 1}\binom{2 i \chi - 1 + p_{i}-p_{i+1}}{p_{i}-p_{i+1}}
\end{equation*}
and 
\begin{equation*}
[t^{a/2}]M(t)^{\chi} = \sum_{\q \vdash a/2}\prod_{j \geq 1}\binom{j \chi - 1 + q_{i}-q_{i+1}}{q_{i}-q_{i+1}}.
\end{equation*}
Note that 
\begin{equation*}
\binom{2 i \chi -1 + \xi}{\xi} \equiv 0 \mod 2 \text{ for } \xi \equiv 1 \mod 2,
\end{equation*}
so the restriction of the first sum to partitions which contain parts of each parity is $\equiv 0 \mod 4$. On the other hand if the partition only contains odd parts, there must be an even number of them since (as $a$ is even) and then the sum is still $\equiv 0 \mod 4$ by the congruence
\begin{equation*}
\binom{2 i \chi - 1 + \xi}{\xi} \equiv 0 \mod 4 \text{for } i \equiv 0 \mod 2 \text{ and } \xi \equiv 1 \mod 2
\end{equation*} 
applied when $\xi$ is the last part of the partition. It remains to show that for a partition $\p$ with even parts
\begin{equation*}
\prod_{i \geq 1}\binom{2 i \chi - 1 + p_{i}-p_{i+1}}{p_{i}-p_{i+1}} \equiv (-1)^{a/2}\prod_{i \geq 1}\binom{i \chi - 1 + p_{i}/2-p_{i+1}/2}{p_{i}/2-p_{i+1}/2} \mod 4.
\end{equation*}
But this follows from
\begin{equation*}
\binom{2 i \chi - 1 + \xi}{\xi} \equiv (-1)^{\xi/2}\prod_{i \geq 1}\binom{i \chi - 1 + \xi/2}{\xi/2} \mod 4 \text{ for } \xi \equiv 0 \mod 2
\end{equation*}
which can be proved by induction.
\end{proof}

Similarly in the $r = 3$ case we already know that the Joyce-Song invariants are integral for primitive classes, so we assume $3 \mid a$. We need an analogue of the above lemma.
\begin{lem}For $3 \mid a$
\[[t^a]M(-t)^{3\chi} \equiv [t^{a/3}]M(-t)^{\chi} \mod 9.\]
\end{lem}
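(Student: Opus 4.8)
The plan is to mirror the structure of the previous lemma, reducing the congruence to a single generating-function identity modulo $9$ together with a combinatorial vanishing argument. First I would strip the signs: since $[t^a]M(-t)^c=(-1)^a[t^a]M(t)^c$ for any $c$, and since $3\mid a$ forces $a\equiv a/3\pmod 2$ (so $(-1)^a=(-1)^{a/3}$ is a common unit modulo $9$), the statement is equivalent to $[t^a]M(t)^{3\chi}\equiv[t^{a/3}]M(t)^{\chi}\pmod 9$. Then, exactly as before, I would apply the Euler product formula to write each side as a sum over unordered partitions, $[t^a]M(t)^{3\chi}=\sum_{\lambda\vdash a}\prod_{n\geq 1}\binom{3\chi n-1+m_n}{m_n}$ and $[t^{a/3}]M(t)^{\chi}=\sum_{\nu\vdash a/3}\prod_{n\geq 1}\binom{\chi n-1+m_n}{m_n}$, where $m_n$ denotes the multiplicity of the part $n$.

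The analytic heart of the argument is the congruence, valid for every $M\in\Z$,
\[(1-t)^{-3M}\equiv (1-t^3)^{-M}+3M\,t\,(1-t)(1-t^3)^{-M-1}\pmod 9\]
in $\Z[[t]]$. I would prove it by factoring $(1-t)^3=(1-t^3)\bigl(1-\tfrac{3t}{1+t+t^2}\bigr)$ and expanding $\bigl(1-\tfrac{3t}{1+t+t^2}\bigr)^{-M}$ as a binomial series: the terms of order $j\geq 2$ carry a factor $3^j\equiv 0\pmod 9$, while the $j=0,1$ terms give the two displayed summands after using $(1+t+t^2)^{-1}=(1-t)(1-t^3)^{-1}$. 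Extracting coefficients then yields the two facts I need. On the one hand, the correction term is supported in exponents $\equiv 0,1\pmod 3$, and $3k-1\equiv 2\pmod 3$, so its $[t^{3k}]$-coefficient vanishes and $\binom{3\chi n-1+3k}{3k}\equiv\binom{\chi n-1+k}{k}\pmod 9$. On the other hand, for $3\nmid m$ the first summand $(1-t^3)^{-M}$ contributes nothing, and the correction shows that $\binom{3\chi n-1+m}{m}$ is congruent modulo $9$ to $\pm 3\chi n$ times a binomial coefficient; this is $\equiv 0\pmod 3$ always, and $\equiv 0\pmod 9$ as soon as $3\mid n$, since then $3\mid M=\chi n$.

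With these in hand I would carry out the combinatorial reduction. For a partition $\lambda\vdash a$, the factor $\binom{3\chi n-1+m_n}{m_n}$ is divisible by $3$ whenever $3\nmid m_n$, and by $9$ if moreover $3\mid n$. Hence if two or more part-sizes have multiplicity not divisible by $3$, the product is $\equiv 0\pmod 9$; and if exactly one part-size $n_0$ has $3\nmid m_{n_0}$, then the constraint $a=\sum_n n\,m_n\equiv 0\pmod 3$ (all other $m_n\equiv 0$) forces $3\mid n_0$, which again makes the product $\equiv 0\pmod 9$. Thus only the \emph{$3$-divisible} partitions, those with $3\mid m_n$ for all $n$, survive modulo $9$; these are exactly the $\lambda$ with $m_n(\lambda)=3\,m_n(\nu)$ for a partition $\nu\vdash a/3$. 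Applying the first coefficient identity factor by factor gives $\prod_n\binom{3\chi n-1+3m_n(\nu)}{3m_n(\nu)}\equiv\prod_n\binom{\chi n-1+m_n(\nu)}{m_n(\nu)}\pmod 9$, and summing over $\nu\vdash a/3$ completes the proof.

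I expect the main obstacle to be the passage from modulus $4$ to modulus $9$: unlike the previous lemma, a single congruence of the form $\binom{\ast}{\ast}\equiv 0$ no longer suffices, and one must track the first-order correction $3M\,t\,(1-t)(1-t^3)^{-M-1}$ precisely enough to see both that it disappears on exponents divisible by $3$ and that it is killed modulo $9$ exactly when $3\mid n$. The generating-function factorisation of $(1-t)^3$ is what makes this bookkeeping manageable; the alternative of a direct induction on binomial coefficients, as in the proof of the previous lemma, looks considerably more delicate at the prime $3$.
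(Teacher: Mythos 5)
Your proof is correct, and its skeleton---rewrite both sides via the Euler product as sums over partitions weighted by products of binomial coefficients in the multiplicities, kill every partition having a multiplicity not divisible by $3$, and match the surviving $3$-divisible partitions of $a$ with partitions of $a/3$ factor by factor---is the same as the paper's. The genuine difference is in how the binomial congruences are obtained. The paper asserts $\binom{3i\chi-1+\xi}{\xi}\equiv 0 \pmod 3$ for $\xi\not\equiv 0 \pmod 3$ (upgraded to mod $9$ when $3\mid i$) and says the factor-wise congruence for $3\mid\xi$ ``can be proved by induction,'' whereas you derive all three statements at once from the single series congruence $(1-t)^{-3M}\equiv(1-t^3)^{-M}+3Mt(1-t)(1-t^3)^{-M-1}\pmod 9$, coming from the factorisation $(1-t)^3=(1-t^3)\bigl(1-\tfrac{3t}{1+t+t^2}\bigr)$. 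This is a real improvement: it supplies the induction the paper omits and makes the dichotomy between $3\mid n$ and $3\nmid n$ transparent. Your case analysis by multiplicities (at most one $n_0$ with $3\nmid m_{n_0}$, and then $3\mid n_0$ is forced by $3\mid a$) is also cleaner than the paper's organisation by residues of the parts, which as literally stated overlooks partitions such as $(4,4,4,3)$ whose parts do not share a common residue yet have only one non-$3$-divisible multiplicity; your argument covers these correctly. One slip to fix: the correction term $3Mt(1-t)(1-t^3)^{-M-1}$ is supported on exponents $\equiv 1,2\pmod 3$, not $\equiv 0,1\pmod 3$ as written (and the aside ``$3k-1\equiv 2$'' is irrelevant); the vanishing of its $[t^{3k}]$-coefficient, which is what you actually use, requires the corrected support. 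The initial sign-stripping via $(-1)^a=(-1)^{a/3}$ for $3\mid a$ is valid and neatly replaces the paper's per-factor sign bookkeeping.
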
  
\begin{proof} As before, summing over partitions (not ordered partitions, as we will do in the rest of the paper) 
\begin{equation*}
[t^a]M(-t)^{3\chi} = \sum_{\p \vdash a}\prod_{i \geq 1}\binom{3 i \chi-1 + p_{i}-p_{i+1}}{p_{i}-p_{i+1}}
\end{equation*}
and
\begin{equation*}
\binom{3 i \chi -1 + \xi}{\xi} \equiv 0 \mod 3 \text{ for } \xi \equiv 1, 2 \mod 3,
\end{equation*}
so modulo 9 we only need to sum over partitions whose parts all have the same residue modulo 3. The cases when this common residue is $1$ or $2$ vanish $\mod 9$ since in either case the number of parts must be divisible by $3$ and we can apply to the last part the congruence
\begin{equation*}
\binom{3 i \chi -1 + \xi}{\xi} \equiv 0 \mod 9 \text{ for } i \equiv 0,\,\,\, \xi \equiv 1, 2 \mod 3.   
\end{equation*}  
So we reduce to partitions all whose parts are $\equiv 0 \mod 3$. The result now follows from
\begin{equation*}
(-1)^{\xi}\binom{3 i \chi -1 + \xi}{\xi} \equiv (-1)^{\xi/3}\binom{i \chi -1 + \xi/3}{\xi/3} \mod 9 \text{ for } \xi \equiv 0 \mod 3
\end{equation*}
which can be proved by induction.
\end{proof}
One can then show that the result would follow from the integrality of
\begin{align*}
&-\frac{1}{12}\sum_{a_1 < a_2,2a_1+a_2 = a}(a_1-a_2)^2(\Omega(a_1,1))^2\Omega(a_2,1)\\
&-\frac{1}{12}\sum_{a_1 < a_2,a_1+2a_2 = a}(a_1-a_2)^2\Omega(a_1,1)(\Omega(a_2,1))^2
\end{align*}
But this follows since we are assuming $3 \mid a$ so e.g. in the first term
\begin{equation*}
a_1 - a_2 = 3 a_1 - a = 3\big(a_1-\frac{a}{3}\big) 
\end{equation*}
and similarly for the second.
\section{Joyce-Song side}\label{JS_sect}
In this section we use Joyce-Song theory for precisely the same wall-crossing described in the introduction. The tilted category $\mathcal{A}'$ satisfies again the assumptions of the theory and the Joyce-Song invariants do not change until the phase of $\mu$ crosses that of $\gamma$. One can check directly that for $\phi(\mu) > \phi(\gamma)$ the Joyce-Song invariants, which we call $\bar\dt^{-}$, vanish for all mixed classes. The general wall-crossing formula in JS theory (see \cite{ks} Section 6.5) is 
\begin{align*}
\bar\dt(\alpha) = \sum_{n \geq 1}\sum_{\alpha_1, \dots, \alpha_n}\frac{(-1)^{n-1}}{2^{n-1}}&\U(\alpha_1, \dots, \alpha_n; \phi_{\mp})\\
&\cdot\sum_{\Upsilon}\prod_{\{i \to j\}\subset \Upsilon^{1}}(-1)^{\bra\alpha_i, \alpha_j\ket}\bra\alpha_i, \alpha_j\ket\prod_k\bar\dt^{-}(\alpha_k)
\end{align*}
where we are summing over effective decompositions of the $K$-theory class $\alpha$ and ordered trees respectively, but we will only explain this in this section for the very special case of D0-D6 states. In general it is not known if the $\bar\dt$ invariants satisfy the Kontsevich-Soibelman wall-crossing formula and only the more complicated Joyce-Song identity has been rigorously established. 
\subsection{Rank $r = 1$} 
\subsubsection{Decompositions and partitions} Fix a $K$-theory class $\alpha = \gamma + a\mu$ and let 
\begin{equation}\label{decompGen}
\alpha = \alpha_1 + \dots + \alpha_n
\end{equation}
be an ordered \emph{decomposition} into effective classes; this corresponds to a 2D ordered partition of the integer vector $(a, 1)$. This decomposition a priori gives a contribution to $\bar\dt(\alpha)$ via Joyce-Song wall-crossing, which is given by a multiple of the $\bar\dt^{-}$ invariant of the 2D partition, $\prod_{k}\bar\dt^{-}(\alpha_k)$. However $\bar\dt^{-}(\beta)$ vanishes for `mixed classes' $\bra \beta, \gamma \ket, \bra\beta, \mu\ket \neq 0$. Thus we can effectively restrict to summing over pairs $(\p, i)$ given by an \emph{ordered partition} $\p$ for $a$ of length $n-1$ and an integer $i = 1, \dots, n$ denoting the place of the (unique) summand $\gamma$ in the decomposition, so that the decomposition of $\alpha$ above looks like
\begin{equation}\label{decomp}
\gamma + a\mu = p_1\mu + \dots + p_{i-1}\mu + \gamma + p_{i}\mu + \dots + p_{n-1}\mu
\end{equation}   
(writing $\p = (p_1, \dots, p_{n-1})$). We write $\p \vdash a$ for an ordered partition of $a$. 
\subsubsection{S symbols} Let us denote by $\phi_{\mp} = \arg\circ Z_{\mp}$ the phase functions with respect to the two different central charges $Z_{\mp}$. We need to compute Joyce's $\s$ symbol (see e.g. \cite{joy} Definition 3.12)
\[\s(\p, i) = \s(p_1\mu , \dots , p_{i-1}\mu , \gamma , p_{i}\mu , \dots , p_{n-1}\mu; \phi_{\mp}).\]
Its value is determined by a set of `see-saw' inequalities (the inequalities (a) and (b) in \cite{joy} Definition 3.12), which say roughly that $\s$ is an ordering operator. Suppose $i > 2$. Then since 
\begin{align*}
\phi_{-}(p_1 \mu) &= \phi_{-}(p_2 \mu),\\
\phi_{+}(p_1 \mu) &< \phi_{+}(\gamma + p_2\mu + \dots + p_{n-1}\mu)
\end{align*}
the see-saw inequalities do not hold and $\s = 0$. For $i = 2$ the see-saw inequalities do hold since
\begin{align*}
\phi_{-}(p_1 \mu) &> \phi_{-}(\gamma),\\
\phi_{+}(p_1 \mu) &\leq \phi_{+}(\gamma + p_1\mu + \dots + p_{n-1}\mu);\\
\phi_{-}(\gamma) &< \phi_{-}(p_2\mu),\\
\phi_{+}(\gamma + p_1\mu) &> \phi_{+}(p_2\mu + \dots + p_{n-1}\mu);\\
\phi_{-}(p_k\mu) &= \phi_{-}(p_{k+1}\mu),\\
\phi_{+}(\gamma + p_1\mu + \dots + p_k\mu) &> \phi_{+}(p_{k+1}\mu + \dots + p_{n-2}\mu)
\end{align*}
for $k = 2, \dots, n-2$. When the see-saw inequalities hold $\s$ is $(-1)^{\# \text{adjecent} (\leq, >) \text{pairs}}$, which gives
\begin{equation}
\s(\p, 2) = (-1)^{n-2}.
\end{equation}
Similarly for $i = 1$ the see-saw inequalities hold since
\begin{align*}
\phi_{-}(\gamma) &< \phi_{-}(p_1\mu),\\
\phi_{+}(\gamma) &> \phi_{+}(p_1\mu + \dots + p_{n-1}\mu);\\
\phi_{-}(p_k\mu) &= \phi_{-}(p_{k+1}\mu),\\
\phi_{+}(\gamma + p_1\mu + \dots + p_{k}\mu) &< \phi_{+}(p_{k+1}\mu + \dots + p_{n-1}\mu)
\end{align*}
for $k = 1, \dots, n-2$, giving 
\begin{equation}
\s(\p, 1) = (-1)^{n-1}.
\end{equation}
\subsubsection{U symbols} Consider again the decomposition \eqref{decomp}. We can obtain a new one of the same form by partitioning the \emph{head} and \emph{tail} sets 
\[\{p_1\mu, \dots, p_{i-1}\mu\}, \{p_i \mu, \dots, p_{n-1}\mu\}\]
according to partions $\q', \q''$ of $i-1, n-i$ and taking the partial sums of $\q', \q''$. We call this a \emph{contraction} $(\p', i)$ of the decomposition $(\p, i)$. A contraction carries a \emph{weight} 
\[\frac{1}{\prod_k q'_k!\prod_l q''_l!}.\] 

The ideal sheaves $K$-theory classes $(a, 1)$ are primitive. In this situation Joyce's $\U(\p, i)$ symbol (\cite{joy} Definition 3.12) reduces to the weighted sum over all contractions of $(\p, i)$ with non-vanishing $\s$ symbol.  

Suppose $i > 1$. Then the only choice for $\q'$ is the trivial partition of $i-1$ (i.e. we must contract all of $\{p_1\mu, \dots, p_{i-1}\mu\}$ to the single class $(p_1 + \dots + p_{i-1})\mu$) with weight $(i-1)!^{-1}$. On the other hand we can contract the tail with an arbitrary $\q \vdash n-i$ with weight $(\prod_k q_k!)^{-1}$. The contracted decomposition is of type $(\p', 2)$, has length $2 + \len(\q)$ and thus $\s$ symbol $(-1)^{\len(\q)}$. 

For $i = 1$ instead the head is empty and the $\q$-contracted decomposition has type $(\p', 1)$, length $1+ \len(\q)$ and thus $\s = (-1)^{\len(\q)}$. So we see that for $i \geq 1$
\begin{equation}
\U(\p, i) = \frac{1}{(i-1)!}\sum_{\q \vdash n-i} \frac{(-1)^{\len(q)}}{\prod_k q_k!}.
\end{equation} 
The result is \emph{independent} of $\p$. Next notice the identity
\begin{equation*}
\sum_{\q \vdash s}\frac{(-1)^{\len(\q)}}{\prod_l q_l!} = \frac{(-1)^s}{s!}
\end{equation*}
which is easily proved by induction, 
\begin{align*}
\sum_{\q \vdash s}\frac{(-1)^{\len(\q)}}{\prod_l q_l!} &= -\sum^{s}_{q_1 = 1}\frac{1}{q_1!}\sum_{\q'\vdash s-q_1} \frac{(-1)^{\len(\q')}}{\prod_l q'_l!}\\
&= -\sum^s_{q_1 = 1}\frac{(-1)^{s-q_1}}{q_1!(s-q_1)!} = \frac{(-1)^s}{s!}.
\end{align*}
Using this identity we find for $i \geq 1$
\begin{equation}
\U(\p, i) = \frac{(-1)^{n-i}}{(i-1)!(n-i)!}.
\end{equation}
Notice that in particular
\begin{equation*}
\sum^n_{i = 1}(-1)^i\U(\p, i) = (-1)^n\frac{2^{n-1}}{(n-1)!}.
\end{equation*}
\subsubsection{Sums over trees} The wall-crossing for the decomposition \eqref{decompGen} carries a sum over trees factor
\[\sum_{\Upsilon} \prod_{\{k \to l\}\subset \Upsilon^1}(-1)^{\bra\alpha_k, \alpha_l\ket}\bra\alpha_k, \alpha_l\ket\]
which is especially simple for $r = 1$. Since $\bra p_k \mu, p_l \mu\ket = 0$ the only ordered tree which gives a non-vanishing factor is the unique ordered tree rooted at $i$ with leaves labelled by $1, \dots i-1, i+1, \dots n$. The factor is then
\begin{equation*}
\prod_{k}(-1)^{p_k}\prod^{i-1}_{l = 1}\bra p_l\mu, \gamma\ket \prod^{n-1}_{l = i}\bra \gamma,p_l\mu\ket = (-1)^{n+i}\prod_k (-1)^{p_k}p_k = (-1)^a(-1)^{n + i}\prod_k p_k.
\end{equation*}
\subsubsection{$\bar\dt^{-}$ of a partition} Since $\bar\dt^{-}(\gamma) = 1$ this is simply the product 
\[\prod_{k}\dt^{-}(p_k \mu),\] 
in particular it only depends on the unordered partition underlying $\p$. Thus we compute
\begin{equation*}
\bar\dt^{-}(\p,i) = (-1)^{n-1}\chi^{n-1}\prod^{n-1}_{k = 1}\left(\sum_{m\mid p_k}\frac{1}{m^2}\right).
\end{equation*} 
\subsubsection{$r = 1$ wall-crossing} We can now write down the rank $r = 1$ wall-crossing formula explicitely in terms of ordered partitions for integers, 
\begin{align*}
\bar\dt(a,1) &= (-1)^a\sum_{n \geq 2}(-1)^n(-\chi)^{n-1}\frac{(-1)^{n-1}}{2^{n-1}}\left(\sum^n_{i = 1}(-1)^i\U(\p, i)\right)\\
&\left(\sum_{\p \vdash a, \len(p) = n-1}\prod_k p_k\left(\sum_{m\mid p_k}\frac{1}{m^2}\right)\right)\\
&=(-1)^a\sum_{n \geq 2}\frac{\chi^{n-1}}{(n-1)!}\left(\sum_{\p \vdash a, \len(p) = n-1}\prod_k p_k\left(\sum_{m\mid p_k}\frac{1}{m^2}\right)\right).
\end{align*}
This can be compared directly with the KS wall crossing. Rearranging we find
\begin{equation}
\sum_{\p \vdash a, \len(\p) = k}\prod_l p_l\left(\sum_{m\mid p_l}\frac{1}{m^2}\right) = \sum_{\operatorname{len}(\n) = \operatorname{len}(\ii) = k, \n\cdot\ii = a} \frac{\prod n_l}{\prod i_l}. 
\end{equation}
which proves the required equivalence.
\subsection{$r = 2$}
\subsubsection{Decompositions} The rank $r = 2$ wall crossing formula contains a copy of the $r = 1$ case, up to scale, given by ordered decompositions of the form
\begin{equation}
2\gamma + a\mu = p_1 \mu + \dots + p_{i-1}\mu + 2\gamma + p_{i}\mu + \dots p_{n-1}\mu. 
\end{equation}
This is because for decompositions of the form above it makes no difference if the $K$-theory class is not primitive: $\U$ remains the sum of $\s$ over all possible contractions.
The factor 
\[\prod_k(-1)^{p_k}p_k\left(\sum_{m\mid p_k}\frac{1}{m^2}\right) = (-1)^a\prod_k p_k\left(\sum_{m\mid p_k}\frac{1}{m^2}\right)\]
in the $\chi^{n-1}$ coefficient of the $r = 1$ formula must be replaced by
\[2^{n-1}\prod_k p_k\frac{1}{4}\left(\sum_{m\mid p_k}\frac{1}{m^2}\right)\]
accounting for products $\pm(-1)^{\bra p_k\mu, 2\gamma\ket}\bra p_k\mu, 2\gamma\ket$ and $\bar\dt^{-}(2\gamma) = \frac{1}{4}$, giving
\begin{align*}
\bar\dt(a,2) &= \sum_{n \geq 2}2^{n-3}\frac{\chi^{n-1}}{(n-1)!}\left(\sum_{\p \vdash a, \len(p) = n-1}\prod_k p_k\left(\sum_{m\mid p_k}\frac{1}{m^2}\right)\right)\\
&+ \text{contribution of new decompositions.}
\end{align*}
The first term coincides precisely with the first term of the rank $r = 2$ KS formula (we may call this the `scaling' behaviour of both the KS and JS formulae in the D0-D6 case). The residual contribution comes from decompositions of the form
\begin{equation}\label{decomp2}
2\gamma + a \mu = p_1\mu + \dots + p_{i-1}\mu + \gamma + p_i\mu + \dots + p_{j-2}\mu + \gamma + p_{j-1}\mu + \dots + p_{n-2}\mu 
\end{equation}
with copies of $\gamma$ sitting at places $1\leq i < j \leq n$, which we denote by $(\p, i, j)$ where $\p$ is a length $n-2 \geq 1$ ordered partition of $a$. In the rest of this section we compute this residual contribution.
\subsubsection{Sum over trees} For fixed values of indexes $i, j$, $1 \leq i < j \leq n$ choose a special integer $l \in \{1, \dots , n\}\setminus\{i, j\}$; then choose possibly empty subsets of 
\[\{1, \dots , i-1\}\setminus\{l\}, \{i+1, \dots , j-1\}\setminus\{l\}, \{j+1, \dots , n\}\setminus\{l\}\]
with cardinality $h, m, t$ respectively. These choices give rise to a well defined ordered tree rooted at $i, j$ by connecting the chosen sets to the vertex labelled $i$, the special vertex $l$ to both $i, j$ and the remaining edges to $j$. Two such trees can be distinguished by their Pr\"ufer code, and all admissible trees for \eqref{decomp2} are of this form. 

A fixed tree contributes to the wall-crossing formula by a common factor $\prod_k(-1)^{p_k}p_k = (-1)^a \prod_k(-1)^{p_k}p_k$ times by a factor specific to the tree. Suppose first $l \in \{1, \dots, i-1\}$; then this factor is 
\begin{equation*}
(-1)^{\#\{\text{ edges outgoing from } i \text{ or } j\}}p_l = (-1)^{m}(-1)^{n-j}p_l.
\end{equation*}
There are $2^{i-2}2^{n-j}\binom{j-i-1}{m}$ trees for such fixed $l$. For $l \in \{i + 1, j-1\}$ the factor is
\begin{equation*}
(-1)^{\#\{\text{ edges outgoing from } i \text{ or } j\}}p_{l-1} = (-1)^{m+1}(-1)^{n-j}p_{l-1}, 
\end{equation*}
and there are $2^{i-1}2^{n-j}\binom{j-i-2}{m}$ trees for such fixed $l$. Finally $l \in \{j+1, \dots, n\}$ gives a factor
\begin{equation*}
(-1)^{\#\{\text{ edges outgoing from } i \text{ or } j\}}p_{l-2} = (-1)^{m}(-1)^{n-j+1}p_{l-2}
\end{equation*}
for $2^{i-1}2^{n-j-1}\binom{j-i-1}{m}$ trees. Thus the sum over graphs turns out to be
\[
\sum_{\Upsilon} \prod_{\{k \to l\}\subset \Upsilon^1}(-1)^{\bra\alpha_k, \alpha_l\ket}\bra\alpha_k, \alpha_l\ket = \] 
\[\prod_{k}(-1)^{p_k}p_k \left(2^{i-2}2^{n-j}(-1)^{n-j}\sum^{i-1}_{l = 1}p_{l}\sum^{j-i-1}_{m = 0}(-1)^{m}\binom{j-i-1}{m}\right.\]
\[+2^{i-1}2^{n-j}(-1)^{n-j}\sum^{j-1}_{l = i+1}p_{l-1}\sum^{j-i-2}_{m = 0}(-1)^{m+1}\binom{j-i-2}{m}\]
\[\left. + 2^{i-1}2^{n-j-1}(-1)^{n-j+1}\sum^{n}_{l = j+1}p_{l-2}\sum^{j-i-1}_{m = 0}(-1)^m\binom{j-i-1}{m}\right).\]
By the binomial theorem this equals
\[\left\{\begin{matrix}
&(-1)^n(-1)^{i+1}2^{n-3}(-1)^a\prod_{k}p_k(\sum^{i-1}_{l=1}p_l - \sum^{n-2}_{l=i}p_{l})&\text{ if }& j = i+1,&\\
&(-1)^n(-1)^{i+1}2^{n-3}(-1)^a\prod_{k}p_k p_i  &\text{ if }& j = i+2,&\\
&0 &\text{ otherwise.}&  
\end{matrix}\right.\]
(recall $n \geq 3$). The upshot of this is that among decompositions \eqref{decomp2} the only that can possibly contribute to the wall-crossing are those with $(i, j)$ as above.
\subsubsection{S and U} We only need to compute $\U$ of the decompositions with non-vanishing $\sum_{\Upsilon}$ factor. Notice first that as in the $r=1$ case the $\s$ symbol of a partition can only be non-vanishing if the first copy of $\gamma$ lies in the first or second place. As in the primitive case $\U(\p, i, j)$ contains a `first order' term  which is the weighted sum of $\s$ over admissible contractions of $\p$. For an arbitrary $\p$ we must contract the head $\{p_1\mu, \dots, p_{i-1}\mu\}$ to the singleton $\{(p_1 + \dots + p_{i-1})\mu\}$. 

Suppose first $j = i+1$. Then contracting the head to a singleton plus contracting the tail using a partition $\q$ has $\s$ symbol 
\[(-1)^{\len(\q)+1}\delta_{p_1 + \dots + p_{i-1} < p_i + \dots + p_{n-2}}.\]
If we also contract the couple $\{\gamma, \gamma\}$ (with weight $1/2$) the $\s$ symbol becomes $(-1)^{\len(\q)}$. The `first order' $\U$ symbol for $j = i+1$ is therefore
\[\frac{1}{(i-1)!}\sum_{\q \vdash n-i-1}\frac{(-1)^{\len(\q)}}{\prod_l q_l!}\left(-\delta_{p_1 + \dots + p_{i-1} < p_i + \dots + p_{n-2}} + \frac{1}{2}\right)\]
\[= \frac{1}{2}\frac{(-1)^{n-i-1}}{(i-1)!(n-i-1)!}(\delta_{p_1 + \dots + p_{i-1} \geq p_i + \dots + p_{n-2}}-\delta_{p_1 + \dots + p_{i-1} < p_i + \dots + p_{n-2}}).\]
For $j = i + 2$ the corresponding `first order' term is
\[\frac{1}{(i-1)!}\sum_{\q \vdash n-i-1}\frac{(-1)^{\len(\q)}}{\prod_l q_l!}(\delta_{p_1 + \dots + p_{i-1} < p_i + \dots + p_{n-2}}\cdot\delta_{p_1 + \dots + p_{i} \geq p_{i+1} + \dots + p_{n-2}})\]
\[=\frac{(-1)^{n-i-1}}{(i-1)!(n-i-1)!}(\delta_{p_1 + \dots + p_{i-1} < p_i + \dots + p_{n-2}}\cdot\delta_{p_1 + \dots + p_{i} \geq p_{i+1} + \dots + p_{n-2}}).\]
In both cases when $2\mid a$ there is also a `second order' term. For $j = i+1$ it is 
\[-\frac{1}{2}\frac{1}{(i-1)!}\sum_{\q \vdash n-i-1}\frac{(-1)^{\len(\q)}}{\prod_l q_l!}\delta_{p_1 + \dots + p_{i-1} = p_i + \dots + p_{n-2}}\]
\[=-\frac{1}{2}\frac{(-1)^{n-i-1}}{(i-1)!(n-i-1)!} \delta_{p_1 + \dots + p_{i-1} = p_i + \dots + p_{n-2}},\]
while for $j=i+2$ we get
\[-\frac{1}{2}\frac{1}{(i-1)!}\sum_{\q \vdash n-i-1}\frac{(-1)^{\len(\q)}}{\prod_l q_l!}(-\delta_{p_1 + \dots + p_{i} = p_{i+1} + \dots + p_{n-2}} + \delta_{p_1 + \dots + p_{i-1} = p_{i} + \dots + p_{n-2}})\]
\[=\frac{1}{2}\frac{(-1)^{n-i-1}}{(i-1)!(n-i-1)!}(\delta_{p_1 + \dots + p_{i} = p_{i+1} + \dots + p_{n-2}} - \delta_{p_1 + \dots + p_{i-1} = p_{i} + \dots + p_{n-2}}).\]  
\subsubsection{$r = 2$ wall-crossing} Recall that our aim is to compare the `residual contribution' given in Joyce-Song theory by decompositions of the form \eqref{decomp2} with the corresponding term in the $r=2$ KS formula, namely
\[\frac{(-1)^a}{2}\sum_{a' < a'', a'+a''=a}(a'-a'')\Omega(a', 1)\Omega(a'', 1).\]
By the above discussion it is enough to sum over $\p$ and $i$ since $j$ is either $i+1$ or $i+2$, and the $\frac{(-1)^{n-1}}{2^{n-1}}\U$ factor over such a sum over decomposition equals
\[\frac{(-1)^a}{2}\frac{(-1)^n}{4}\frac{1}{(i-1)!(n-i-1)!}\prod_{k}p_k\left(\sum_{m \mid p_k}\frac{1}{m^2}\right)\]
\begin{align*}
\nonumber\cdot\,\big(\big(\sum^{i-1}_{l=1}p_l - \sum^{n-2}_{l=i}p_{l}\big)&(\delta_{p_1 + \dots + p_{i-1} < p_i + \dots + p_{n-2}}-\delta_{p_1 + \dots + p_{i-1} \geq p_i + \dots + p_{n-2}})\\
\nonumber&- p_i (2\delta_{p_1 + \dots + p_{i-1} < p_i + \dots + p_{n-2}}\cdot\delta_{p_1 + \dots + p_{i} \geq p_{i+1} + \dots + p_{n-2}}\\
&- \delta_{p_1 + \dots + p_{i-1} = p_{i} + \dots + p_{n-2}} + \delta_{p_1 + \dots + p_{i} = p_{i+1} + \dots + p_{n-2}})\big).
\end{align*}
Notice that the second order term for $\U$ when $j = i+1$ is only nonzero when $\sum^{i-1}_{l=1}p_l = \sum^{n-2}_{l=i}p_{l}$, hence it gives no contribution in the formula above.  

Now sum over all $\p, i$ and compare to the KS term. The second factor in the formula above acts as on ordering operator, giving the sum over $a' < a''$. This can be seen using the fact that for a fixed partition $\p$ there exists a unique $i$ with
\[\delta_{p_1 + \dots + p_{i-1} < p_i + \dots + p_{n-2}}\cdot\delta_{p_1 + \dots + p_{i} \geq p_{i+1} + \dots + p_{n-2}} = 1.\]
The first factor equals the sum over all product $\Omega(a'), \Omega(a'')$ by the usual rearrangement
\begin{equation*}
\sum_{\p \vdash a', \len(\p) = k}\prod_l p_l\left(\sum_{m\mid p_l}\frac{1}{m^2}\right) = \sum_{\operatorname{len}(\n) = \operatorname{len}(\ii) = k, \n\cdot\ii = a'} \frac{\prod n_l}{\prod i_l}, 
\end{equation*}
(same for $a''$), and the $r=1$ KS wall-crossing i.e.
\[\Omega(a', 1) = (-1)^{a'}\sum_{k \geq 1} \frac{\chi^k}{k!} \sum_{\operatorname{len}(\n) = \operatorname{len}(\ii) = k, \n\cdot\ii = a'} \frac{\prod n_l}{\prod i_l},\]
(same for $a''$).  
\subsection{$r = 3$} Exactly as for $r = 2$ case there is a copy of the rank $r = 1$ KS formula, up to scaling $\gamma$ to $3 \gamma$, contributing
\begin{equation*}
(-1)^a \sum_{n \geq 2}3^{n-1}\frac{\chi^{n-1}}{(n-1)!}\left(\sum_{\p\vdash a, \len(p)=n-1}\prod_k p_k\,\frac{1}{9}\left(\sum_{m\mid p_k} \frac{1}{m^2}\right)\right)
\end{equation*}
which can be identified with the term 
\begin{equation*}
(-1)^a\sum_{k \geq 1}3^{k-2}\frac{\chi^k}{k!}\sum_{\len(\n) = \len(\ii) = k, \n\cdot\ii = a}\frac{\prod n_l}{\prod i_l}
\end{equation*}
in the $r = 3$ KS formula.

Let us now consider the case when exactly $2$ copies of $\gamma$ appear in the decomposition, or in other words decompositions $(\p, 2_{i} + 1_{j}), (\p, 1_{i}+2_{j})$ for $i < j$. It should be clear that both cases are very close, up to scale, to the decompositions studied for the $r = 2$ case. One can go thorugh all of the previous subsection, treating the first or second copy of $\gamma$ as a `variable' which can be rescaled to $2\gamma$, without any additional changes, until we reach the very last paragraph where the $r = 1$ formula for $\Omega(a', 1), \Omega(a'', 1)$ is used. This must now be replaced with the corresponding $r = 2$ formula for $\Omega(a', 2)$ which gives
\begin{align*}
\nonumber&+\frac{1}{2}\sum_{a_1 < 2a_2,a_1+a_2=a}(-1)^{a_1-2a_2}(a_1-2a_2)\Omega(a_1,2)\Omega(a_2,1)\\
\nonumber&-\frac{1}{4}\sum_{a_1 < a_2 < a_3, a_1+a_2+a_3=a}(a_1-a_2)(a_1+a_2-2a_3)\Omega(a_1,1)\Omega(a_2,1)\Omega(a_3,1)\\
&+\frac{1}{4}\sum_{a_1 < a_2, 2a_1+a_2 = a}(a_1-a_2)\Omega(a_1, 1)\Omega(a_2, 1)\\
\end{align*}  
and for $\Omega(a'', 2)$, giving
\begin{align*}
\nonumber&+\frac{1}{2}\sum_{2a_1 < a_2, a_1+a_2 = a}(-1)^{2a_1-a_2}(2a_1-a_2)\Omega(a_1,1)\Omega(a_2, 2)\\
\nonumber&-\frac{1}{4}\sum_{a_1 < a_2 < a_3, a_1+a_2+a_3=a}(a_2-a_3)(2a_1-a_2-a_3)\Omega(a_1,1)\Omega(a_2,1)\Omega(a_3,1)\\
&+\frac{1}{4}\sum_{a_1 < a_2, a_1+2a_2 = a}(a_1-a_2)\Omega(a_1, 1)\Omega(a_2, 1)\\
\end{align*}
in the $r = 3$ KS identity.

It remains to consider the `genuine' new decompositions, i.e. those of the form $(\p, 1_i + 1_j + 1_k)$ for $i < j < k$. We expect that these contribute
\begin{align*}
\nonumber&-\frac{1}{12}\sum_{a_1 < a_2,2a_1+a_2 = a}(a_1-a_2)^2(\Omega(a_1,1))^2\Omega(a_2,1)\\
&-\frac{1}{12}\sum_{a_1 < a_2,a_1+2a_2 = a}(a_1-a_2)^2\Omega(a_1,1)(\Omega(a_2,1))^2.
\end{align*}
This can be shown be splitting the sum over graphs over the two different types of graphs, those with one `cap'
\begin{center}
\centerline{
\xymatrix{
 &                      & {\bullet^{l}_{p_l\mu}}\ar@{-}[dl]\ar@{-}[d]\ar@{-}[dr] &                    &\\
 & {\bullet^i_{\gamma}}\ar@{-}[d]  & {\bullet^j_{\gamma}}\ar@{-}[d]                       & {\bullet^k_{\gamma}}\ar@{-}[d]&\\
 &   \dots              &   \dots                                   &    \dots           &\\
}}
\end{center}
which account for terms which are square terms $\pm p^2_l$ (times by the usual common factor for trees), and
\begin{center}
\centerline{
\xymatrix{
 &                      &   {\bullet^{l_1}_{l_1\mu}}\ar@{-}[dl]\ar@{-}[dr] &                    & {\bullet^{l_2}_{l_2\mu}}\ar@{-}[dl]\ar@{-}[dr]& &\\
 & {\bullet^{i}_{\gamma}}\ar@{-}[d]  &                                   & {\bullet^{j}_{\gamma}}\ar@{-}[d]&                                & {\bullet^{k}_{\gamma}}\ar@{-}[d] &\\
 &   \dots              &                                   &    \dots           &                                & \dots &\\
}}
\end{center}
for $l_1 < l_2$, giving double products $\pm 2 p_{l_1} p_{l_2}$. This can be shown by computing directly as in the case of one-rooted graphs.
\subsection{Induction} We wish to sketch an inductive argument for the KS identities (but not integrality) for arbitrary rank $r$. 

Suppose we wish to prove that the Joyce-Song invariants satisfy the KS identity for rank $r$. We fix an ordered partition $\q\vdash r$ and places $\ii = i_1, \dots, i_{\len(\q)}$, corresponding to decompositions for $a \mu + r\gamma$ of type $(\p, \q, \ii)$. 

Suppose now that $\len(\q) < r$. By induction, we have a formula for the contributions of such partitions to the JS side when at least one of the parts equals $1$. And we also know inductively the scaling behaviour of this formula by replacing this part ($= 1$) with a multiple $q$ (here we use that the admissible trees are those with less than $r$ vertexes labelled by multiples of $\gamma$). As in passing from $r = 2$ to $r = 3$, this rescaling is given by replacing a factor $\Omega(a_k, 1)$ with the rank $q$ component of the $lhs$ in the KS wall-crossing formula. And as before we substitute for this the corresponding term on the $rhs$, which is given inductively in terms of products of $\Omega(a_k, q_l)$ for $q_l < q$. 

This procedure gives all the terms in the rank $r$ component of the $rhs$ of the KS wall-crossing, \emph{except} those arising from the order $r$ correction in the BCH formula for $\log(\exp(X)\exp(Y))$. The order $r$ correction to $\log(\exp(X)\exp(Y))$ has the form 
\begin{equation*}
\sum_{n > 0} \frac{(-1)^{n-1}}{n}\sum_{s_i + t_i > 0, \sum (s_i + t_i) = r}\frac{\sum^n_{k = 1}(s_k + t_k)^{-1}}{s_1!t_1! \cdots s_n!t_n!}[X^{s_1}Y^{t_1} \cdots X^{s_n}Y^{t_n}].
\end{equation*}
The terms in the $rhs$ corresponding to this correction can only involve iterated Lie brackets of terms $e_{a'\mu + \gamma}$. These correspond to a sum of terms of the form
\begin{equation*}
\sum_{a_1 < \dots < a_{r-1}, s_1 \dots s_{r-1}, \sum{a_k s_k} = a} P(a_1, \dots, a_{r-1})(\Omega(a_i, 1))^{s_1}\cdots(\Omega(a_{r-1}, 1))^{s_{r-1}}
\end{equation*}
where $s_k \geq 0$, $\sum_k s_k = r$, $s_i \geq 2$ for some $i$, and $P(a_1, \dots, a_{r-1})$ is a homogeneous polynomial of degree $r-1$. 

On the Joyce-Song side the terms corresponding to monomials in $P$ involving precisely $h$ variables coincide with the contribution of the rooted trees with $r$ vertexes labelled by $\gamma$ and precisely $h$ `caps' (in the terminology introduced for $r = 3$). 
\section{From D0-D6 to GW}\label{GW_sect}
In this section we explain how the theory of Gross-Pandharipande-Siebert, in particular the main result from \cite{gps}, the `full commutator formula' Theorem 5.6, applies to the case of D0-D6 states. As we will see from this point of view what links D0-D6 states to GW invariants in the product formula for the McMahon function. We briefly recollect the `full commutator formula' in the form we will need.
\subsection{Orbifold blowups} Let $D \subset S$ be a divisor in a smooth surface and $x \in D$ a smooth point. Smoothness implies that for each $j \geq 1$ there is a unique subscheme of $D$ of length $j$ with reduced scheme $x$. We view this nonreduced scheme as a subscheme $x^j_D \subset S$. For $j \geq 2$ the scheme-theoretic blowup $S_j$ of $S$ along $x^j_{D}$ has a unique singular point of type $A_{j-1}$ lying in the exceptional divisor $E$. For these quotient singularities we can put the structure of a smooth orbifold on $\mathcal{S}_j$ over $S_j$. For example the blowup of $\C^2$ along a length $2$ subscheme $Z$ supported at the origin has an ordinary double point at the point of $E$ corresponding to the direction cut out by $Z$, and so is locally the smooth orbifold $\C^2/\Z_2$. In this case one can check directly that, on the smooth orbifold, $E^2 = -\frac{1}{2}$, and in general one can prove that, on $\mathcal{S}_j$, $E^2 = -\frac{1}{j}$.
\subsection{Graded ordered partitions} A graded ordered partition $P$ is a $d-$tuple $P = (\p^1, \dots, \p^d)$ of ordered partitions such that \emph{every part of $\p^j$ is divisible by $j$}. Its parts are labelled by $p^j_{k}$ for $j = 1,\dots, d$ and $k = 1, \dots, l^j = \len(\p^j)$. We set $\len(P) = (l^1, \dots, l^d)$ and $|P| = \sum_{j, k} p^j_k$.
\subsection{Toric orbifolds} Let $(a,r)$ denote a \emph{primitive} vector. The fan given by $(-1,0), (0,-1)$ and $(a,r)$ defines a toric surface $X_{(a, r)}$, the weighted projective plane $\PP(a, r, 1)$. The faces then correspond to toric divisors $D_1, D_2, D_{out}$. Removing the 3 torus fixed points $[1:0:0], [0:1:0], [0:0:1]$ we obtain a quasi-projective toric orbifold $X^o_{(a,r)}$ with divisors $D^o_1, D^o_2, D^o_{out}$. 

Let now $G = (P_1, P_2)$ be pair of graded ordered partitions $P_1 = (\p^1_1, \dots, \p^{d_1}_1)$, $P_2 = (\p^1_2, \dots, \p^{d_2}_2)$. For $i = 1, 2$ we choose distinct points $x^j_{ik} \in D^o_i$ corresponding to the parts $p^j_{ik}$ of $\p^j_i$. We pick a toric resolution $\widetilde{X}\to X$ whose corresponding divisors $\widetilde{D}_1,\widetilde{D}_2, \widetilde{D}_{out}$ are disjoint and define a smooth orbifold $\widetilde{X}[G]$ over $\widetilde{X}$ as the orbifold blowup of $\widetilde{X}$ at the points $x^j_{ik}$. The underlying singularities become worse as $j$ increases. We denote by $X^o[G] \subset \widetilde{X}[G]$ the preimage of $X^o$. The exceptional divisors are $E^j_{ik}$, and we can define a class $\beta \in H_2(\widetilde{X}, \Z)$ by $\beta\cdot\widetilde{D}_i = |P_i|$ for $i = 1, 2$, $\beta\cdot\widetilde{D}_{out} = \operatorname{ind}(|P_1|,|P_2|)$ (the index of a possibly nonprimitive vector) and $\beta\cdot D = 0$ for all other generators of the Picard group. From $\beta$ we obtain a natural class $\beta_{G}$ on the blowup, i.e. in orbifold cohomology $H_2(\widetilde{X}[G])$, by pulling back and subtracting the weighted exceptional divisors, namely 
\begin{equation*}
\beta_G = \pi^*\beta - \sum_{i = 1,2}\sum^{d_i}_{j = 1}\sum^{l^j_i}_{k = 1} p^j_{ik}[E^{j}_{ik}].
\end{equation*}
\subsection{Moduli spaces of relative stable maps} Gross-Pandharipande-Siebert consider the moduli stack $\overline{M}(\widetilde{X}[G]/\widetilde{D}_{out})$ of genus $0$ stable relative maps in the class $\beta_{G}$ with full tangency of order $\operatorname{gcd}(|P_1|, |P_2|)$ at an unspecified point of the divisor $\widetilde{D}_{out}$, and the open substack $\overline{M}(X^o[G]/D^o_{out})$ given by maps which avoid $\widetilde{X}[G]\setminus X^o[G]$. One of their main technical results (\cite{gps} Proposition 5.5) proves that $\overline{M}(X^o[G]/D^o_{out})$ is proper with a perfect obstruction theory of virtual dimension $0$, so for all $G$ one has well defined GW invariants 
\[N[G] = \int_{[\overline{M}(X^o[G]/D^o_{out})]^{vir}}1 \in \Q.\]
\subsection{Full commutator formula} For $d_1, d_2 \gg 1$ consider the functions
\begin{equation*}
\sigma = \prod^{d_1}_{j = 1}\prod^{l^j_1}_{k = 1}\big(1 + s^j_k x^j\big), \tau = \prod^{d_2}_{j = 1}\prod^{l^j_2}_{k}\big(1+t^{j}_k y^j\big)
\end{equation*}
as elements of the ring of formal power series $\C[[x,y,s^{\bullet}_{\bullet}, t^{\bullet}_{\bullet}]]$ in as many variables as necessary. We define monomials 
\begin{equation*}
s^{P_1} = \prod^{d_1}_{j = 1}\prod^{l^j_1}_{k=1}\big(s^j_k\big)^{\frac{p^j_{1k}}{j}}, t^{P_2} = \prod^{d_2}_{j = 1}\prod^{l^j_2}_{k=1}\big(t^j_k\big)^{\frac{p^j_{2k}}{j}}. 
\end{equation*}
Let $(a, r)\in\Z^2$ be a \emph{primitive} vector. Gross-Pandharipande-Siebert prove a formula for the formal power series $\log f_{(a,r)}$ attached to $(a,r)$ in the ordered product factorisation for the commutator $\tau^{-1}\sigma^{-1}\tau\sigma$, namely
\begin{equation*}
\log f_{m'_{out}} = \sum^{\infty}_{h = 1} \sum_{G = (P_1, P_2)} h N[G]s^{P_1}t^{P_2} x^{h a} y^{h r}
\end{equation*} 
where the sum is over all graded ordered partitions $P_1$ of length $(l^1_1, \dots, l^{d_1}_1)$ and $P_2$ of length $(l^1_2, \dots, l^{d_2}_2)$ such that $(|P_1|, |P_2|) = h\cdot(a, r)$.
\subsection{Application to D0-D6 states} In our case we have
\begin{equation*}
\widetilde{\sigma} = \prod_{n \geq 1}(1-(-u)^n x^n)^{n \chi}, \tau = (1-u y).
\end{equation*}
Clearly then $d_2 = 1, l^2_1 = 1$ and $t^1_1 = -u$. On the other hand we can truncate $\sigma$ to a fixed $d_1\gg 1$ and write 
\begin{equation*}
\sigma = \prod^{d_1}_{j = 1} \prod^{j\chi}_{k = 1} (1-(-u)^j x^j),
\end{equation*} 
which corresponds to the choices
\begin{equation*}
l^1_j = j \chi,\,\,\,s^j_{k} = (-1)^j u^j \text{ for } k = 1, \dots, j \chi.
\end{equation*}
Now fix a primitive vector $(a, r)$. The admissible ordered partitions $(P_1, P_2)$ actually have the form $(P_1, h r)$ for some $h \geq 1$, so $t^{P_2} = (-1)^{k r} u^ {k r}$. On the other hand $P_1 = (\p^1_1, \p^2_1, \dots, \p^{d_1})$ is a $d_1-$tuple of ordered partitions, with $\len(\p^j_1) = j\chi$ and $|P_1| = h a$, and where each part of $\p^j_1$ is divisible by $j$. It follows that 
\begin{equation*}
s^{P_1} = \prod^{d_1}_{j = 1}\prod^{j \chi}_{k = 1}((-1)^j u^j)^{\p^{j}_{1k}} = (-1)^{h a} u^{h a}.
\end{equation*} 
By the full commutator formula then
\begin{equation*}
\log f_{(a, r)} = \sum^{\infty}_{h = 1} \sum_{P_{\chi}} h N[P_{\chi}](-1)^{h(a + r)}(u x)^{h a}(u y)^{h r}
\end{equation*}
where the sum is over all graded ordered partitions $P_{\chi}$ with length vector 
\begin{equation*}
\len P_{\chi} = (\chi, 2\chi, \dots, d_1\chi)
\end{equation*}
and $|P_{\chi}| = h a$. We have obtained the required D0-D6/GW duality in the ring $\C[[x, y]]$
\begin{equation}
\prod_{h \geq 1} \exp\left(\sum_{P_{\chi}} h N[P_{\chi}](-1)^{h( a +  r)}(x)^{h a}(y)^{h r}\right) = \prod_{h \geq 1}(1 - (-1)^{h^2 a r}(x)^{h a} (y)^{h r})^{\Omega(h a, h r)}.
\end{equation} 
The two sets of invariants are completely determined through each other.
\section{Appendix}
In this appendix we will illustrate the Joyce-Song D0-D6 invariants by computing $\bar\dt(2,2)$ using Behrend functions methods. This is possible by a formula of Joyce-Song (see \cite{joy} equation (16)) which connects $\bar\dt_{Gies}$ invariants to other invariants $\pii^m$ (depending on a parameter $m \gg 1$) called \emph{pair invariants}. These are obtained rigidifying with a section so they become virtual counts and are given by weighted Euler characteristics (we will define them in a moment). Here $\bar\dt_{Gies}$ denotes the invariants counting Gieseker semistable objects in the category $\mathcal{A}$. In turn it is possible to recover $\bar\dt$ from $\bar\dt_{Gies}$.\\

By definition a Joyce-Song stable pair is given by a nonzero section 
\[s\!: \O_X(-m) \to F\]
where $F$ is \emph{Gieseker semistable}, $s$ does not factor through a semi-stabilising subsheaf and $m$ is greater than the Castelnuovo-Mumford regularity of $F$, all this modulo the natural gauge equivalence relation. Stable pairs have a fine moduli scheme which we denote by $\mathcal{M}^m_{stp}(\beta)$. This Hilbert scheme of stable pairs has a symmetric obstruction theory and so gives virtual counts (fixing a class $\beta$)
\[\pii^m(\beta) = \int_{[\mathcal{M}^m_{stp}(\beta)]^{vir}}1 = \chi(\mathcal{M}^m_{stp}(\beta), \nu_{\mathcal{M}^m_{stp}(\beta)}).\] 

We also write $\mathcal{M}_{Gies}$ for the Artin stack of Gieseker semistable coherent sheaves with the same Chern character. There is a natural representable morphism 
\[\pi\!: \mathcal{M}^m_{stp} \to \mathcal{M}_{Gies}\]
given by forgetting the morphism $s$.\\ 

The Joyce-Song formula \cite{joy} equation (16) holds for any effective class $\alpha \in K^+(X)$ and $m \gg 1$, giving
\begin{equation}\label{fromPairs}
\pii^m(\alpha) = \sum_{n \geq 1, \{\alpha_i\}^n_{i = 1} \subset (K^+(X))^n, \sum^n_{i = 1} \alpha_i = \alpha, \forall i P_{\alpha_i} = P_{\alpha}} \frac{(-1)^n}{n!} F(\alpha_1, \dots, \alpha_n)
\end{equation}
where
\begin{equation}
F(\alpha_1, \dots, \alpha_n) = \prod^n_{j = 1} (-1)^{\bra\O_X(-m)-\sum^{j-1}_{i = 1}\alpha_i, \alpha_j\ket}\bra\O_X(-m)-\sum^{j-1}_{i = 1}\alpha_i, \alpha_j\ket\bar\dt_{Gies}(\alpha_j).
\end{equation}
while $P_{\bullet}$ denotes the Hilbert polynomial and $\bra\bullet, \bullet\ket$ the Mukai pairing.\\

Let now $F$ be a torsion free sheaf which is isomorphic to $\O_X^{\oplus 2}$ away from a zero dimensional subscheme of $X$. By \cite{oko} Lemma 1.1.8 we can extend this isomorphism to an inclusion $F \subset \O^{\oplus 2}_X$. Intersecting with a generic copy of $\O_X \subset \O_X \oplus \O_X$ gives $F \cap \O_X \cong \I_Z$ for some zero dimensional subscheme $Z \subset X$. So we see that $F$ is an extension 
\[0 \to \I_Z \to F \to \I_W \to 0\]
for some other zero dimensional $W \subset X$. 

If moreover $F$ is Gieseker semistable with $\operatorname{ch}(F) = (2, 0 ,0 , -2)$ we see that it must be either a \emph{nonsplit} extension
\begin{equation}\label{len2}
0 \to \I_Z \to F \to \O_X \to 0
\end{equation}
with $\operatorname{len}(Z) = 2$, or 
\begin{equation}\label{points}
0 \to \I_p \to F \to \I_q \to 0
\end{equation}
for closed points $p, q \in X$. One can show that these sheaves are Gieseker semistable, and there is no intersection bewteen the sets of sheaves appearing in \ref{len2}, \ref{points}.\\

For any $0$-dimensional subscheme $Z$ we have the exact sequence
\[0 = H^0(\I_Z) \to H^0(\O_X) \to H^0(\O_Z) \to H^1(\I_Z) \to H^1(\O_X) = 0\]
so the sheaves $F$ of the form \eqref{len2} are parametrised by 
\[\PP(\Ext^1(\O_X, \I_Z)) = \PP(H^1(\I_Z)) \cong \Spec(\C).\]
Proceding a bit further with the same exact sequence we find
\[0 = H^1(\O_Z) \to H^2(\I_Z) \to H^2(\O_X) = H^1(\O_X)^{\vee} = 0.\]
In particular $\Ext^1(\I_q, \I_p) = 0$ for $p \neq q$. To see this take RHom$(\I_q, \bullet)$ of the sequence $0 \to \I_p \to \O_X \to \O_p \to 0$ to find
\begin{align*}
0 = \Hom(\I_q, \I_p) \to \Hom(\I_q, \O_X) &\to \Hom(\I_q, \O_p)\\
&\to \Ext^1(\I_q, \I_p) \to \Ext^1(\I_q, \O_X) \cong H^2(\I_q) = 0, 
\end{align*}
and note that since $p \neq q$ the map $\Hom(\I_q, \O_X) \to \Hom(\I_q, \O_p)$ is onto. So for $p \neq q$ we only have the split extension $F = \I_p \oplus \I_q$. 

On the other hand $\PP(\Ext^1(\I_p, \I_p)) \cong \PP(T_p\Hilb^1(X))\cong \PP(T_p X)$.\\

To compute $\pii^m(2,2)$ we stratify $\mathcal{M}^m_{stp}$ according to the image of $\pi$; this gives strata over which Behrend's constructible function is constant. Let us consider 
\[\mathcal{M}_1 \subset \mathcal{M}^m_{stp}\]
given by pairs projecting to sheaves $\I_p \oplus \I_q$ with $p \neq q$. The fibres of $\pi|_{\mathcal{M}_1}$ are all isomorphic to $\PP(H^0(\I_p(m)))\times\PP(H^0(\I_q(m)))$ with \emph{even} dimension $2(P_X(n)-2)$ and Euler characteristic $(P_X(m) - 1)^2$. Since the fibres are smooth with even dimension we see 
\[\nu_{\mathcal{M}_1} = \pi^*\nu_{\mathcal{M}_{Gies}}\]
(the sign would be negative for odd dimensional fibres, see e.g. \cite{joy} Section 1.2 for this and other general properties of Behrend functions) and by \cite{joy} equation (10)
\begin{align*}
\nu_{\mathcal{M}_{Gies}}(\I_p \oplus \I_q) &= (-1)^{\bra\I_p, \I_q\ket} \nu_{\mathcal{M}_{Gies}}(\I_p)\cdot\nu_{\mathcal{M}_{Gies}}(\I_q)\\
&= \nu_{\mathcal{M}_{Gies}}(\I_p)\cdot\nu_{\mathcal{M}_{Gies}}(\I_q)\\
&= (-1)(-1) = 1.
\end{align*}
Integrating over unordered pairs $\{p, q\}$ is half of integrating away from the diagonal in $X \times X$, so we find that the $\mathcal{M}_1$ contribution to $\pii^m(2,2)$ is
\begin{align*}
\int_{\mathcal{M}_1}\nu_{\mathcal{M}^m_{stp}}d\chi = \int_{\mathcal{M}_1}\pi^*\nu_{\mathcal{M}_{Gies}}d\chi &= \frac{1}{2}(P_X(m) - 1)^2 \int_{(X\times X)\setminus \Delta} d\chi\\
&= \frac{1}{2}(P_X(m) - 1)^2 \chi((X \times X)\setminus\Delta)\\
&= \frac{1}{2}(P_X(m) - 1)^2(\chi^2 - \chi). 
\end{align*}

Similarly for pairs $\mathcal{M}_2$ projecting to $\I_p \oplus \I_p$ the fibre of $\pi$ is $\Gr(2, H^0(\I_p(m)))$ with \emph{even} dimension $2(P_X(m)-3)$ and Euler characteristic 
\[\binom{P_X(m)-1}{2} = \frac{1}{2}(P_X(m)-1)^2 - \frac{1}{2}(P_X(m)-1)\]
while $\nu_{\mathcal{M}_{Gies}}(\I_p \oplus \I_p) = \nu_{\mathcal{M}_{Gies}}(\I_p)^2 = 1$, yielding
\[\int_{\mathcal{M}_2}\nu_{\mathcal{M}^m_{stp}}d\chi = \int_{\mathcal{M}_2} \pi^* \nu_{\mathcal{M}_{Gies}}d\chi = \frac{1}{2}(P_X(m)-1)^2 \chi - \frac{1}{2}(P_X(m)-1)\chi.\]

Consider next a sheaf $F$ given by a nonsplit extension
\[0 \to \I_p \to F \to \I_p \to 0.\]
Its deformations correspond to deformations of $p \in X$ plus deformations of $[F]$ in $\PP(\Ext^1(\I_p, \I_p)) \cong \PP^2$, so 
\[\nu_{\mathcal{M}_{Gies}}(F) = (-1)^2(-1)^3 = -1.\]
The set of all extensions above forms the projective bundle $\PP(TX)\to X$. Let $\mathcal{M}_3$ denote the locus of pairs projecting to extenstions as above. The fibre $\pi^{-1}(F)$ is given by $H^0(\I_p(m)) \times \PP(H^0(\I_p(m)))$ with \emph{odd} dimension $2P_X(n) - 3$ and Euler characteristic $P_X(n)-1$, so we find
\[\int_{\mathcal{M}_3}\nu_{\mathcal{M}^m_{stp}}d\chi = -\int_{\mathcal{M}_3} \pi^* \nu_{\mathcal{M}_{Gies}}d\chi = 3 (P_X(m)-1) \chi.\]

The last kind of pairs $\mathcal{M}_4$ project to a nonsplit extension 
\[0 \to \I_Z \to F \to \O_X \to 0\]
for a length 2 subscheme $Z \subset X$. The sheaf $F$ is stable, and admissible sections are $\PP(H^0(F))$ with \emph{odd} dimension $2P_X(n)-3$ and Euler characteristic $2(P_X(n)-1)$. As we have seen $\ext^1(\O_X, \I_Z) = 1$ and so these sheaves are parametrised by the Hilbert scheme $\Hilb^2(X)$. It remains to compute $\nu_{\mathcal{M}_{Gies}}(F)$. A nice way to do this is by the fundamental Joyce-Song duality \cite{joy} (10), (11) Section 1.3, which gives
\begin{align*}
\int_{[E]\in\PP(\Ext^1(\O_X, \I_Z))}\nu_{\mathcal{M}_{Gies}}(E) d\chi &= \int_{[E]\in\PP(\Ext^1(\I_Z,\O_X))}\nu_{\mathcal{M}_{Gies}}(E) d\chi\\
&+ (\ext^1(O_X, \I_Z) - \ext^1(\I_Z, \O_X))\nu_{\mathcal{M}_{Gies}}(\I_X \oplus\O_X)\\
&= -\nu_{\mathcal{M}_{Gies}}(\I_X \oplus\O_X)\\
&= -(-1)^{\bra\I_X, \O_X\ket}\nu_{\mathcal{M}_{Gies}}(\I_Z)\cdot\nu_{\mathcal{M}_{Gies}}(\O_X)\\
&= -\nu_{\mathcal{M}_{Gies}}(\I_Z),
\end{align*}
using $\ext^1(\I_Z, \O_X) = 0$ and the fact that $\O_X$ is rigid by assumption. Since the left hand side is just $\nu_{\mathcal{M}_{Gies}}(F)$ we see this equals $-\nu_{\mathcal{M}_{Gies}}(\I_Z)$. Integrating over all pairs in $\mathcal{M}_4$ gives
\[\int_{\mathcal{M}_4}\nu_{\mathcal{M}^m_{stp}}d\chi = -\int_{\mathcal{M}_3}\pi^*\nu_{\mathcal{M}_{Gies}}d\chi = (P_X(n)-1)(\chi^2 + 5\chi).\]
 
Putting together these computations we see that $\pii(2,2) = \sum^4_{i = 1}\int_{\mathcal{M}_i}\nu_{\mathcal{M}^m_{stp}}d\chi$ is a polynomial in the `variable' $P_X(n)-1$, 
\begin{equation}
\pii(2,2) = \frac{1}{2}\chi^2(P_X(m)-1)^2 + \frac{\chi^2 + 15\chi}{2}(P_X(m)-1). 
\end{equation}

Let us now extract the $\bar\dt$ invariants from $\pii(2,2)$. According to \eqref{fromPairs} the only contributions to $\pii(2,2)$ are 
\begin{align*}
-F_1 &= -2(P_X(n)-1)\bar\dt_{Gies}(2,2),\\
\frac{1}{2}F_2 &= \frac{1}{2}(P_X(n)-1)^2 (\bar\dt_{Gies}(1,1))^2\\
&= \frac{1}{2}(P_X(n)-1)^2 \chi^2.
\end{align*}
Comparing with $\pii(2,2)$ gives
\[\bar\dt_{Gies}(2,2) = -\frac{5}{4}\chi - \frac{\chi^2 + 5\chi}{2}.\]
The wall-crossing to $\bar\dt$ is especially simple in this case,  
\[\bar\dt(2,2) = \bar\dt_{Gies}(2,2) + \bar\dt_{Gies}(2,1).\]
Therefore we find
\[\bar\dt(2,2) = -\frac{5}{4}\chi,\,\,\,\Omega(2,2)=-\chi.\] 


\begin{thebibliography}{}
\bibitem[BF]{bf} K. Behrend and B. Fantechi, \emph{Symmetric obstruction theories and Hilbert schemes of points on threefolds}, Algebra Number Theory  2  (2008),  no. 3, 313--345.

\bibitem[CDP]{dia} W. Chuang, D. Diaconescu and G. Pan, \emph{Chamber Structure and Wallcrossing in the ADHM Theory of Curves II}, arXiv:0908.1119. 

\bibitem[CSS]{szabo} M. Cirafici, A. Sinkovics and R. J. Szabo, \emph{Cohomological gauge theory, quiver matrix models and Donaldson–Thomas theory}, arXiv:0803.4188 [hep-th].

\bibitem[GP]{gp} M. Gross, R. Pandharipande, \emph{Quivers, curves, and the tropical vertex}, arXiv:0909.5153.

\bibitem[GPS]{gps} M. Gross, R. Pandharipande and B. Siebert, \emph{The tropical vertex}, arXiv:0902.0779v1.

\bibitem[Hu]{huy} D. Huybrecths, \emph{Stability structures on Lie algebras after Kontsevich and Soibelman}, notes available at {\tt http://www.math.uni-bonn.de/people/huybrech/Publnotes.html.} 

\bibitem[JS]{joy} D. Joyce and Y. Song, \emph{A theory of generalized Donaldson-Thomas invariants}, arXiv:0810.5645v3.

\bibitem[KS]{ks} M. Kontsevich and Y. Soibelman, \emph{Stability structures, motivic Donaldson-Thomas invariants and cluster transformations}, arXiv:0811.2435v1.  

\bibitem[LP]{lp} M.~Levine and R.~Pandharipande, \emph{Algebraic cobordism revisited,} math.AG/0605196.

\bibitem[Li]{li} J.~Li, \emph{Zero dimensional Donaldson-Thomas invariants of threefolds},  Geom. Topol.  \textbf{10} (2006), 2117--2171 (electronic).

\bibitem[OSS]{oko} C. Okonek, M. Schneider and H. Spindler, \emph{Vector bundles on complex projective spaces}, Progress in Mathematics, 3. Birkh\"auser, Boston, Mass., 1980. vii+389 pp.

\bibitem[Re]{rein} M. Reineke, \emph{Cohomology of quiver moduli, functional equations, and integrality of Donaldson-Thomas type invariants}, arXiv:0903.0261v1.

\bibitem[Th]{richard} R.~P. Thomas, \emph{A holomorphic Casson invariant for Calabi-Yau 3-folds, and bundles on K3 fibrations,} J. Differential Geom., {\bf 54}, 367--438, 2000.

\bibitem[To]{toda} Y. Toda, \emph{On a computation of rank two Donaldson-Thomas invariants}, arXiv:0912.2507v1. 
\end{thebibliography}
\end{document}